\theoremstyle{plain} 
\newtheorem{theorem}{Theorem}[section]
\newtheorem{corollary}[theorem]{Corollary}
\newtheorem{lemma}[theorem]{Lemma}
\newtheorem{definition}[theorem]{Definition}
\numberwithin{equation}{section}
\newcommand{\al}{\alpha}
\newcommand{\be}{\beta}
\newcommand{\vp}{\varphi}
\newcommand{\om}{\omega}
\newcommand{\ot}{\otimes}
\newcommand{\lra}{\longrightarrow}
\newcommand{\lla}{\longleftarrow}
\begin{document}
\title{Hochschild and Simplicial Cohomology}
\author{Jerry Lodder\thanks{Mathematical Sciences; 
Dept. 3MB, Box 30001; New Mexico State University; 
Las Cruces, NM 88003; \texttt{jlodder@nmsu.edu}.}}
\date{}
\maketitle

\noindent
{\bf{Abstract.}} 
We study a naturally occurring $E_{\infty}$-subalgebra of the full
$E_2$-Hochschild cochain complex arising from coherent cochains.  For
group rings and certain category algebras, these cochains detect 
$H^*(B {\cal{C}})$, the simplicial cohomology of the classifying space of the 
underlying group or category, $\cal{C}$.  In this setting the
simplicial cup product of cochains on $B{\cal{C}}$ agrees with the
Gerstenhaber product and Steenrod's cup-one product of cochains agrees
with the pre-Lie product.  We extend the idea of coherent cochains to
algebras more general than category algebras and dub the resulting
cochains autopoietic.  Coefficients are from a commutative ring $k$ 
with unit, not necessarily a field.

\smallskip
\noindent
Key Words:  Hochschild cohomology, Autopoietic cochains, Poset
algebras, Category algebras

\smallskip
\noindent
MSC Classification:  16E40, 18H15, 55R40

\section{Introduction}
In this paper we extend Gerstenhaber's idea of coherent Hochschild
cochains \cite{Gerstenhaber2017} to algebras $A$ with a free
$k$-module basis
$$  {\cal{B}} = \{ \varphi_{\om} \ | \ \om \in \Omega \},  $$
where $\Omega$ is an arbitrary index set, and the product on $A$ is
induced from a product among the $\vp$s with 
$$  \vp_{\al} \vp_{\be} = \mu_{\al , \be} \, \vp_m,  $$
where $\vp_m \in {\cal{B}}$, $\mu_{\al, \be} \in k$.  Here $k$ is a
commutative ring with unit, not necessarily a field, and $A$ is unital
as well.   We call the resulting cochains $f \in {\rm{Hom}}_k(A^{\ot
  n}, \, A)$ autopoietic (self-generating), and they satisfy the
condition
$$  f(\vp_1 \ot \vp_2 \ot \, \ldots \, \ot \vp_n ) =
\lambda_{\vp_1, \, \ldots \, , \vp_n} (\vp_1 \cdot \vp_2 \cdot \ldots \cdot
\vp_n), $$ 
where $\lambda_{\vp_1, \, \ldots \, , \vp_n}$ is an element of $k$
depending on $(\vp_1, \, \vp_2, \, \ldots \, , \vp_n )$. For group
rings, poset algebras and certain category algebras, the autopoietic
cochains detect $H^*( B {\cal{C}})$, the simplicial (singular)
cohomology of the classifying space of the underlying group or
category, $\cal{C}$, expressed
as a direct summand of  $HH^*(k [{\cal{C}}]; \, k [{\cal{C}}])$, the
Hochschild cohomology of the category algebra $k [{\cal{C}}]$.  When
the elements $\mu_{\al , \be} \in k$ are idempotents (as for group rings
or category algebras), the autopoietic cochains form an
$E_{\infty}$-subalgebra of the full $E_2$-Hochschild cochain complex
with the Gerstenhaber product agreeing with the simplicial cup product
of cochains on $B {\cal{C}}$ and the pre-Lie product agreeing with
Steenrod's cup-one product of cochains on $B {\cal{C}}$.  We believe
the result for $E_{\infty}$-algebras is new and the autopoietic
cochains provide a common and direct setting for separate calculations
appearing in \cite{Lodder1, Lodder2}.      

Hochschild cohomology, $HH^*$, is a topic of current research as a
target for topological quantum field theories 
\cite{Costello, Kontsevich2008} with the Gerstenhaber product in $HH^*$ 
corresponding to the corbordism between
two disjoint copies of the unit circle, $S^1$, and one copy of $S^1$.
The product on $HH^*$ is known to be graded commutative via two
possible cochain homotopies, namely the pre-Lie products 
\cite{Gerstenhaber1962}.  These two homotopies are themselves not
cochain homotopic, forming the structure of an $E_2$-algebra on
Hochschild cochains.  The action of $E_2$-operads on the Hochschild
cochain complex that are themselves homotopic to the little disk
operad, i.e., a solution to the Deligne conjecture, has been a topic
research studied in 
\cite{Kontsevich1999, Kontsevich2000, McClure-Smith, Tamarkin, Voronov}. 
In this paper, however, we study a subcomplex of Hochschild cochains
that carries an $E_{\infty}$-algebra structure, namely the subcomplex
of autopoietic cochains.  For a group ring, $k[G]$, the Frobenius
algebra structure on $k[G]$, Equation \eqref{Frobenius}, can be used to identify
Hochschild cochains that are dual to the constant loops on $BG$, which
yields Lemma \eqref{AP-iso}.  The construction detecting the constant
loops on $BG$ generalizes to algebras that do not necessarily carry
the structure of a Frobenius algebra, such as poset algebras and
category algebras.  For
a poset with a finite number of objects, 
the autopoietic cochains on the poset algebra $k[{\cal{C}}]$ compute 
$$  HH^*( k [ {\cal{C}}]; \, k [ {\cal{C}}]) \simeq H^*(B{\cal{C}}), $$
where $\cal{C}$ is the category associated to a poset (see \S 2).

The idempotent condition, $\mu^2 = \mu$, is needed to map cochains on
the bar construction to Hochschild cochains. It is also needed to
compare Steenrod's cup-one product with Gerstenhaber's pre-Lie
product.  The subcomplex of autopoietic Hochschild cochains, however, 
can be defined without the idempotent condition, and is of independent
interest.  
Section 2 of the paper contains the specifics for group
rings and poset algebras.  Section 3 provides the general definition
of autopoietic cochains, and establishes when they form an
$E_{\infty}$-subalgebra of the full Hochschild cochain complex.  
Section 4 contains an application of autopoietic cochains to compute 
$HH^*(k [ {\cal{C}}]; \, k [ {\cal{C}}])$, where $\cal{C}$ is a
certain type of endomorphism-isomorphism category formed by an amalgam
of groups and a poset, which extends results in \cite{Xu}, where only
field coefficients are used.  Our results also apply to infinite,
discrete groups.  Finally we identify a topological space whose
simplicial cohomology is isomorphic to 
$HH^*(k [ {\cal{C}}]; \, k [ {\cal{C}}])$, where $\cal{C}$ is an
amalgam of groups and a poset.

\section{Group Rings and Poset Algebras}

In this section we discuss a common setting for both group rings and
poset algebras for constructing certain Hochschild cochains that
detect $H^*( B{\cal{C}})$, the simplicial cohomology of the
classifying space of the underlying group or category.  Let $G$ be a
discrete group, finite or infinite, considered as a category $\cal{C}$
with one object and morphisms given by the elements of $G$.  The
classifying space $BG$ can be identified with $B{\cal{C}}$ and the
group ring $k[G]$ can be identified with the category algebra $k
[{\cal{C}}]$.  Here $k$ is a commutative ring with unit $1 \in k$.   

The Hochschild homology $HH_* (k[G]; \, k[G])$ can be computed from
the complex
$$  k[G] \overset{b}{\lla} k[G]^{\ot 2} \overset{b}{\lla}
\ldots \overset{b}{\lla} k[G]^{\ot n} \overset{b}{\lla} k[G]^{\ot
  (n+1)} \overset{b}{\lla} \ldots \, ,$$
where for $z = g_0 \ot g_1 \ot \ldots \ot g_n \in k[G]^{\ot (n+1)}$,
\begin{align*}
b(z) = \; & 
 \sum_{i=0}^{n-1} (-1)^i \, g_0 \ot g_1 \ot \ldots \ot g_i g_{i+1}
\ot \ldots \ot g_n  \\
& + (-1)^{n} g_n g_0 \ot g_1 \ot g_2 \ot \ldots \ot g_{n-1} .
\end{align*}
This is the homology of a simplicial set known as the cyclic bar
construction \cite[7.3.10]{Loday}, denoted in this paper as
$N_*^{\rm{cy}}(G)$, with $N_n^{\rm{cy}}(G) = G^{n+1}$, $n \geq 0$.  
It is known \cite[7.4.5]{Loday} that $H_*(BG)$ is a direct summand of 
$HH_* (k[G]; \, k[G])$.  Moreover, $H_*(BG; \, k)$ can be computed
from the subcomplex $k[N_*^{\rm{cy}}(G, \, e)]$ of Hochschild
chains, where
$$  N_*^{\rm{cy}}(G, \, e) = \{ (g_0, \, g_1, \, \ldots \, ,  g_n) \in
G^{n+1} \ | \ g_0g_1 \ldots g_n = e \}.  $$
Here $e$ is the identity element of $G$.  Let $B_*(G)$ be the
simplicial bar construction on $G$, where $B_n(G) = G^n$, $n \geq 0$, 
with the standard face and degeneracy maps \cite[B.12, Example 1]{Loday}.  
There is a well-known chain isomorphism 
$$  k[B_*(G)] \lra k[N_*^{\rm{cy}}(G, \, e)]  $$
induced by the map of simplicial sets
\begin{align*}
& \iota : B_n(G) \to N_n^{\rm{cy}}(G), \ \ \ \iota : G^n \to
G^{n+1}, \\
& \iota(g_1, \, g_2, \, \ldots \, , g_n) = \big( (g_1 g_2 \ldots
g_n)^{-1}, \, g_1, \, g_2, \, \ldots \, , g_n \big).  
\end{align*}
The inverse chain map is induced by another map of simplicial sets
\begin{align*}
& \pi : N_n^{cy}(G) \to B_n(G), \ \ \ \pi: G^{n+1} \to G^n, \\
& \pi (g_0, \, g_1, \, g_2, \, \ldots \, , g_n) = (g_1, \, g_2, \,
\ldots \, , g_n).
\end{align*}
Let $(HH^*( k[G]), \ b^*)$ denote the cohomology of the
${\rm{Hom}}_k$-dual of the $b$-complex.  Specifically, $(HH^*( k[G]),
\ b^*)$ is the cohomology of the following with $n \geq 1$: 
$$  \ldots  \overset{b^*}{\lra} {\rm{Hom}}_k (k[G]^{\ot n}, \, k) 
\overset{b^*}{\lra} {\rm{Hom}}_k (k[G]^{\ot (n+1)}, \, k)
\overset{b^*}{\lra} \ldots \ . $$
Clearly, $H^*(BG; \, k)$ is a direct summand of 
 $(HH^*( k[G]), \ b^*)$, $G$ finite or infinite.  

Let $({\rm{Hom}}_k( k[G]^{\ot *}, \, k[G]), \ \delta)$ be the standard
complex \eqref{H-cochain-complex}
for computing Hochschild cohomology, $HH^*( k[G]; \, k[G])$.  Define
an inner product 
\begin{equation} \label{Frobenius}
\langle \ , \; \rangle : k[G] \ot k[G] \to k  
\end{equation}
given on group elements $g$, $h \in G$ by
$$  \langle g, \, h \rangle = \begin{cases} 1, & h = g^{-1} \\
                                            0, & h \neq g^{-1}.
                              \end{cases}  $$
Then extend $\langle \ , \; \rangle$ to be linear on the tensor product
$k[G] \ot k[G]$.  
There is a cochain map \cite{Lodder1}
$$  \Phi_n : ({\rm{Hom}}_k (k[G]^{\ot n} , \, \, k[G]),\ \delta) \to ({\rm{Hom}}_k 
 (k[G]^{\ot (n+1)} , \, \, k),\ b^*), \ \ \   n \geq 0, $$
given by 
$$  \Phi_n (f) (g_0 \ot g_1 \ot \ldots \ot g_n) =
\langle g_0, \ f(g_1 \ot g_2 \ot \ldots \ot g_n) \rangle ,  $$
where $f : k[G]^{\ot n} \to k[G]$ is a $k$-linear map and each $g_i
\in G$.  Let $\Phi = \{ \Phi_n \}_{n \geq 0}$.  For $G$ finite, $\Phi$
is an isomorphism of cochain complexes.  For $G$ infinite, $\Phi$ is
only injective on cochains.  Nonetheless, 
$HH^*( k[G]; \, k[G])$ contains a copy of $H^* (BG; \, k)$, $G$ finite
or infinite \cite[2.11.2]{Benson}, and $HH^*( k[G]; \, k[G])$ splits as the direct
sum of $H^*(BG; \, k)$ and a complementary submodule.  This achieved
in this paper 
by writing ${\rm{Hom}}_k(k[G]^{\ot *}, \ k[G])$ as the direct sum of
two cochain complexes, one being the autopoietic cochains and the
other the non-autopoietic cochains.  

\begin{definition}
For $n \geq 1$, $f \in {\rm{Hom}}_k(k[G]^{\ot n}, \ k[G])$ is
autopoietic if for all $(g_1, \, g_2, \, \ldots \, , \, g_n) \in G^n$,
we have
$$  f(g_1 \ot g_2 \ot \ldots \ot g_n) = \lambda_{g_1, \ldots , g_n}
(g_1 g_2 \ldots g_n)  $$
for some $\lambda_{g_1, \ldots , g_n} \in k$, depending on 
$(g_1, \, g_2, \, \ldots \, , \, g_n)$.  
\end{definition}
\begin{definition}
For $n \geq 0$, $f \in {\rm{Hom}}_k(k[G]^{\ot n}, \ k[G])$ is strictly
autopoietic if $f$ is autopoietic and additionally for $n = 0$ and 
$f \in {\rm{Hom}}_k(k, \ k[G])$, we have $f(1) = \lambda e$ for some
$\lambda \in k$.
\end{definition}
Let $AP(k[G]^{\ot n})$ denote the $k$-submodule of 
${\rm{Hom}}_k(k[G]^{\ot n}, \ k[G])$ consisting of (strictly)
autopoietic cochains.  It is shown in the next section that
$$  AP( k[G]^{\ot *} ) = \sum_{n \geq 0} AP(k[G]^{\ot n})  $$
is a subcomplex of $\big( {\rm{Hom}}_k(k[G]^{\ot *}, \ k[G]), \
\delta \big)$.  
\begin{lemma}  \label{AP-iso}
There is a natural isomorphism
$$  H^* ( AP( k[G]^{\ot *} ) ) \simeq H^*(BG; \, k),  $$
where $G$ is a finite or an infinite group.
\end{lemma}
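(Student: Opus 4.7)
The plan is to exhibit an explicit cochain isomorphism
$$\Psi: AP(k[G]^{\ot *}) \lra C^*(BG;\, k),$$
where $C^n(BG;\, k) = {\rm Hom}_k(k[B_n(G)], \, k)$ is the standard simplicial cochain complex of the bar construction. The key observation is that an autopoietic cochain is completely determined by its scalars $\lambda_{g_1, \ldots, g_n} \in k$, so the map $\Psi_n(f)(g_1, \ldots, g_n) := \lambda_{g_1, \ldots, g_n}$, extended $k$-linearly, is a $k$-module isomorphism in each degree. Its inverse sends $\phi \in C^n(BG;\, k)$ to the autopoietic cochain $g_1 \ot \cdots \ot g_n \mapsto \phi(g_1, \ldots, g_n)\,(g_1 g_2 \cdots g_n)$, extended $k$-linearly (this is well defined because the simple tensors of group elements form a $k$-basis of $k[G]^{\ot n}$). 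For $n = 0$, the strictly autopoietic condition $f(1) = \lambda e$ pairs with the usual identification $C^0(BG;\, k) = k$.

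The heart of the argument is verifying that $\Psi$ commutes with differentials; the same computation simultaneously shows that $AP(k[G]^{\ot *})$ is closed under $\delta$, hence a subcomplex. For an autopoietic $f$, every term of the Hochschild coboundary
\begin{align*}
\delta f(g_0 \ot \cdots \ot g_n)
& = g_0 \, f(g_1 \ot \cdots \ot g_n) + \sum_{i=1}^{n} (-1)^i f(g_0 \ot \cdots \ot g_{i-1}g_i \ot \cdots \ot g_n) \\
& \quad + (-1)^{n+1} f(g_0 \ot \cdots \ot g_{n-1}) \, g_n
\end{align*}
is a scalar multiple of the single group element $g_0 g_1 \cdots g_n$, because multiplying any adjacent pair of factors before applying $f$ leaves the total product invariant. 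Reading off that scalar yields
$$\Psi_{n+1}(\delta f)(g_0, \ldots, g_n) = \lambda_{g_1, \ldots, g_n} + \sum_{i=1}^{n} (-1)^i \lambda_{g_0, \ldots, g_{i-1} g_i, \ldots, g_n} + (-1)^{n+1} \lambda_{g_0, \ldots, g_{n-1}},$$
which, after the reindexing $(g_0, \ldots, g_n) = (h_1, \ldots, h_{n+1})$, is precisely the simplicial coboundary $d^*\Psi_n(f)$ of the bar construction $B_*(G)$.

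Thus $\Psi$ is an isomorphism of cochain complexes, inducing the asserted natural isomorphism on cohomology. Naturality in $G$ is built into the formulas, and the argument never sums over $G$, so the finite versus infinite distinction is immaterial. I do not expect a serious obstacle; the one point deserving genuine care is the $n = 0$ degree, where the strictly autopoietic hypothesis $f(1) = \lambda e$ is essential: for a general $f(1) = \sum_g c_g g$ the coboundary $\delta f(g) = g f(1) - f(1) g$ is not a $k$-multiple of $g$, so $\delta f$ would fail to be autopoietic. With the strict convention, $\delta f = 0$, matching the vanishing of the simplicial coboundary on constants.
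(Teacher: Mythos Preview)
Your proof is correct. The core idea---that an autopoietic cochain is determined by its scalars $\lambda_{g_1,\ldots,g_n}$, and that reading off these scalars matches the Hochschild coboundary $\delta$ with the simplicial coboundary $d^*$ on $B_*(G)$---is exactly what drives the paper's argument as well, and your term-by-term verification is clean.

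The paper takes a slightly different packaging: rather than mapping directly to $C^*(BG;\,k)$, it first identifies $AP(k[G]^{\ot n})$ with ${\rm Hom}_k(k[N_n^{\rm cy}(G,\,e)],\,k)$ via the Frobenius-type pairing $\langle g,\,h\rangle$ (the map $\Phi$), and then invokes the standing simplicial isomorphism $B_*(G)\simeq N_*^{\rm cy}(G,\,e)$ to reach $H^*(BG;\,k)$. Your route collapses these two steps into one by working with $B_*(G)$ from the outset, which is more elementary and self-contained. What the paper's detour buys is the connection to the free-loop-space picture $|N_*^{\rm cy}(G)|\simeq {\rm Maps}(S^1,\,BG)$ and to the inner-product machinery that motivates the autopoietic condition in the first place; your version trades that narrative for directness. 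Either way the cochain-map verification is the same computation, and the paper in fact defers that verification to its Section~3 in the general setting, whereas you carry it out explicitly here.
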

\begin{proof}
For $n \geq 0$, we show that
$$  AP(k[G]^{\ot n}) \simeq {\rm{Hom}}_k ( k[N_n^{\rm{cy}} (G, \, e)],
\ k).  $$
First, let $f \in AP(k[G]^{\ot n})$ and $(g_0, \, g_1, \, \ldots  ,
g_n) \in N_n^{\rm{cy}} (G, \, e)$.  Then $g_0^{-1} = (g_1 g_2 \ldots
g_n)$
and
\begin{align*}
\Phi (f) (g_0 \ot g_1 \ot \ldots \ot g_n) &  = \langle g_0, \ f(g_1
\ot \ldots \ot g_n) \rangle \\
& = \langle g_0, \ \lambda_{g_1, \ldots , g_n} (g_1 g_2 \ldots g_n)
\rangle \\
& = \lambda_{g_1, \ldots , g_n}.
\end{align*}
For $h_0$, $h_1$, $\ldots$, $h_n \in G$ with $h_0^{-1} \neq (h_1 h_2
\ldots h_3)$ and $f$ autopoietic, $\Phi (f) (h_0 \ot h_1 \ot \ldots
\ot h_n) = 0$.  Thus, 
$$  \Phi : AP(k[G]^{\ot n}) \to {\rm{Hom}}_k ( k[N_n^{\rm{cy}} (G, \,
e)], \ k).  $$ 

Conversely, given $\al \in {\rm{Hom}}_k ( k[N_n^{\rm{cy}} (G, \,
e)], \ k)$, then $\al (g_0 \ot g_1 \ot \ldots \ot g_n) \in k$ for all
$(g_0, \, g_1, \, \ldots \, , g_n) \in G^{n+1}$ with
$g_0^{-1} = (g_1 g_2 \ldots g_n)$.  Thus $\al$ depends only on 
$(g_1, \, g_2, \, \ldots \, , g_n)$.  Set
$$  \al (g_0 \ot g_1 \ot \ldots \ot g_n) = \lambda_{g_1, \ldots ,
  g_n}.  $$
Define $\Psi (\al) \in AP (k[G]^{\ot n})$ by
$$  \Psi (\al) (g_1 \ot g_2 \ot \ldots \ot g_n) = 
\lambda_{g_1, \ldots , g_n} (g_1 g_2 \ldots g_n).  $$
It follows quickly that
$$ \Psi : {\rm{Hom}}_k ( k[N_n^{\rm{cy}} (G, \, e)], \ k)  \to
AP (k[G]^{\ot *})  $$
is a cochain map with $\Phi \circ \Psi = {\bf{1}}$ and
$\Psi \circ \Phi = {\bf{1}}$.  In the next section we prove that in a
more general setting $\Psi$ is a cochain map.  Since the cohomology of 
${\rm{Hom}}_k ( k[N_n^{\rm{cy}} (G, \, e)], \ k)$ can be canonically
identified with $H^*(BG; \, k)$, the lemma follows.
\end{proof}

Geometrically, the autopoietic cochains are dual to the constant loops
on $BG$, which can be seen from the geometric realization
$$  | N^{cy}_*(G) | \simeq {\rm{Maps}}(S^1, \, BG).  $$
The space ${\rm{Maps}}(S^1, \, BG)$ is commonly referred to as the free
loop space on $BG$.  
The non-autopoietic cochains are defined to mirror the quotient 
$$ {\rm{Hom}}_k(k[G]^{\ot n}, \ k[G])/AP(k[G]^{\ot n}).  $$ 
For $g \in G$, define projection maps
$$  \pi_g : k[G] \to k  $$
as follows.  Let $z = \sum_{h \in G} c_h h \in k[G]$.  For $G$
infinite, only finitely many of $c_h$s are non-zero.  Let $\pi_g
(z) = c_g$, i.e., the coefficient of $g$.    
\begin{definition}
A cochain $f \in {\rm{Hom}}_k ( k[G]^{\ot n}, \ k[G])$ is
non-autopoietic if for all $(g_1, \, g_2, \, \ldots  , g_n) \in G^n$,
we have
$$  \pi_h (f(g_1 \ot g_2 \ot \ldots \ot g_n)) = 0, \ \ \
h = g_1 g_2 \ldots g_n . $$
For $n = 0$, $f \in {\rm{Hom}}_k(k, \, k[G])$ is non-autopoietic if 
for $  f(1)  =  \sum_{g \in G} c_g \, g$, 
we have $c_e = 0$, i.e., the coefficient on $e$ is zero.
\end{definition}
Let $NP(k[G]^{\ot n})$ denote the submodule of non-autopoietic
cochains in ${\rm{Hom}}_k( k[G]^{\ot n}, \ k[G])$.
\begin{lemma}
The submodule $NP(k[G]^{\ot *} ) = \sum_{n \geq 0}NP(k[G]^{\ot n})$ 
forms a subcomplex of 
$$ \big( {\rm{Hom}}_k(k[G]^{\ot *}, \ k[G]), \ \delta \big). $$
\end{lemma}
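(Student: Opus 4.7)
The plan is to check directly from the formula for the Hochschild differential $\delta$ that if $f \in NP(k[G]^{\ot n})$ then $\delta f \in NP(k[G]^{\ot (n+1)})$, i.e., that the coefficient on $h = g_1 g_2 \ldots g_{n+1}$ of $(\delta f)(g_1 \ot g_2 \ot \ldots \ot g_{n+1})$ vanishes for every $(g_1, \ldots, g_{n+1}) \in G^{n+1}$. Recall that
$$
(\delta f)(g_1 \ot \ldots \ot g_{n+1})
= g_1 f(g_2 \ot \ldots \ot g_{n+1})
+ \sum_{i=1}^{n} (-1)^i f(g_1 \ot \ldots \ot g_i g_{i+1} \ot \ldots \ot g_{n+1})
+ (-1)^{n+1} f(g_1 \ot \ldots \ot g_n) g_{n+1},
$$
so my strategy is to analyze the projection $\pi_h$ of each term separately.

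For the middle terms the argument is immediate: in each summand $f$ is evaluated on a tensor whose total product in $G$ is still $h = g_1 g_2 \ldots g_{n+1}$, so the non-autopoietic hypothesis on $f$ gives $\pi_h(f(\ldots)) = 0$ directly. For the first term, write $f(g_2 \ot \ldots \ot g_{n+1}) = \sum_{k \in G} c_k k$ with $c_{g_2 \ldots g_{n+1}} = 0$; then $g_1 f(g_2 \ot \ldots \ot g_{n+1}) = \sum_k c_k (g_1 k)$, and the coefficient of $h$ here is $c_{g_1^{-1} h} = c_{g_2 \ldots g_{n+1}} = 0$. The last term is symmetric: write $f(g_1 \ot \ldots \ot g_n) = \sum_k c_k k$ with $c_{g_1 \ldots g_n} = 0$ and observe that the coefficient of $h$ in $\sum_k c_k (k g_{n+1})$ is $c_{h g_{n+1}^{-1}} = c_{g_1 \ldots g_n} = 0$. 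Adding the contributions gives $\pi_h\bigl((\delta f)(g_1 \ot \ldots \ot g_{n+1})\bigr) = 0$.

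The base case $n = 0$ needs a separate (but easy) check against the definition for degree $0$. If $f(1) = \sum_g c_g g$ with $c_e = 0$, then $(\delta f)(g_1) = g_1 f(1) - f(1) g_1 = \sum_g c_g (g_1 g - g g_1)$, whose coefficient on $g_1$ (the product of the single input) is $c_e - c_e = 0$, verifying that $\delta f$ is non-autopoietic as a $1$-cochain.

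I do not anticipate a serious obstacle here; the statement is essentially a bookkeeping check that multiplying by $g_1$ on the left (respectively by $g_{n+1}$ on the right) shifts the ``forbidden'' group element in exactly the right way to preserve the vanishing condition defining $NP$. The only mild care needed is to keep track of which product element plays the role of ``$h$'' in each summand, and to confirm that the $n = 0$ case is consistent with the $n = 1$ definition, which it is.
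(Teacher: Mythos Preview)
Your proof is correct and follows essentially the same approach as the paper's: both analyze $(\delta f)(g_1 \ot \ldots \ot g_{n+1})$ term by term, using that left (resp.\ right) multiplication by $g_1$ (resp.\ $g_{n+1}$) bijectively shifts coefficients so that the vanishing of $\pi_{g_2\cdots g_{n+1}}$ (resp.\ $\pi_{g_1\cdots g_n}$) on $f$ forces the vanishing of $\pi_h$ on the corresponding summand of $\delta f$. Your version phrases this as a direct coefficient computation while the paper phrases it as a contradiction, and you additionally spell out the $n=0$ case, which the paper leaves implicit.
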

\begin{proof}
Let $\vp \in NP(k[G]^{\ot n})$.  Then
\begin{align*}
& (\delta \vp )  (g_1 \ot g_2 \ot \ldots \ot g_{n+1}) = 
  g_1 \vp (g_2 \ot \ldots \ot g_{n+1}) + \\
&  \Big( \sum_{i = 1}^n (-1)^i \vp (g_1 \ot \ldots \ot g_i g_{i+1} \ot
\ldots \ot g_{n+1}) \Big)
 + (-1)^{n+1} \vp (g_1 \ot \ldots \ot g_n ) g_{n+1}.
\end{align*}
If $g_1 \vp (g_2 \ot \ldots \ot g_{n+1})$ contains a non-zero summand
of the from $\lambda (g_1 g_2 \ldots g_{n+1})$, then
$$  \vp (g_2 \ot \ldots \ot g_{n+1}) = g_1^{-1} g_1 \vp (g_2 \ot
\ldots \ot g_{n+1}) $$
would contain a non-zero summand of the form 
$\lambda (g_2 g_3 \ldots g_{n+1})$, contradicting that 
$\vp \in  NP(k[G]^{\ot n})$.  Clearly
$$  \pi_h \vp (g_1 \ot \ldots \ot g_i g_{i+1} \ot
\ldots \ot g_{n+1}) = 0, \ \ \ h = (g_1 g_2 \ldots g_{n+1}).  $$
Also, by multiplication on the right by $g_{n+1}^{-1}$, we see that
$\vp (g_1 \ot \ldots \ot g_n ) g_{n+1}$ cannot contain a non-zero
summand of the form  $\lambda (g_1 g_2 \ldots g_{n+1})$.  Thus,
$\delta \vp \in NP(k[G]^{\ot (n+1)})$.  
\end{proof}
\begin{lemma}  \label{direct-sum}
There is a direct sum splitting of cochain complexes
$$  {\rm{Hom}}_k(k[G]^{\ot *}, \, k[G]) \simeq AP(k[G]^{\ot *}) \oplus
NP(k[G]^{\ot *}),  $$
where in dimension zero, cochains in $AP^*$ are considered to be strictly
autopoietic.  
\end{lemma}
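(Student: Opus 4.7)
The plan is to decompose each Hochschild cochain $f \in {\rm{Hom}}_k(k[G]^{\ot n}, \, k[G])$ uniquely as a sum of an autopoietic cochain and a non-autopoietic cochain, and then to verify that this $k$-module decomposition respects the coboundary $\delta$.

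For $n \geq 1$ I would define the autopoietic projection by extracting the coefficient of the ``diagonal'' group element:
\[ P_{AP}(f)(g_1 \ot g_2 \ot \ldots \ot g_n) = \pi_{g_1 g_2 \ldots g_n}\bigl( f(g_1 \ot g_2 \ot \ldots \ot g_n) \bigr) \cdot (g_1 g_2 \ldots g_n), \]
and set $P_{NP}(f) = f - P_{AP}(f)$. In dimension zero I split $f(1) = \sum_{g \in G} c_g \, g$ as $c_e \, e + \sum_{g \neq e} c_g \, g$, the first summand being strictly autopoietic and the second non-autopoietic. By construction $P_{AP}(f)$ is (strictly) autopoietic, with $\lambda_{g_1, \ldots , g_n}$ the extracted coefficient, while $P_{NP}(f)$ is non-autopoietic since that coefficient has been removed. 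Uniqueness of the decomposition follows at once: any cochain that is simultaneously autopoietic and non-autopoietic must satisfy $\lambda_{g_1, \ldots , g_n} = \pi_{g_1 \ldots g_n}(f(g_1 \ot \ldots \ot g_n)) = 0$ on every tuple, hence equals zero. This establishes the direct sum as $k$-modules in each dimension.

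For the splitting to live at the level of cochain complexes, $\delta$ must preserve both summands. The preceding lemma establishes $\delta(NP) \subseteq NP$. For $\delta(AP) \subseteq AP$ I would compute directly: given $f \in AP$, the leading term $g_1 f(g_2 \ot \ldots \ot g_{n+1}) = \lambda_{g_2, \ldots, g_{n+1}}(g_1 g_2 \ldots g_{n+1})$ and the trailing term $f(g_1 \ot \ldots \ot g_n) g_{n+1} = \lambda_{g_1, \ldots, g_n}(g_1 g_2 \ldots g_{n+1})$ are obviously scalar multiples of $g_1 g_2 \ldots g_{n+1}$, and each middle term $f(g_1 \ot \ldots \ot g_i g_{i+1} \ot \ldots \ot g_{n+1})$ is by hypothesis a scalar multiple of $g_1 \ldots g_i g_{i+1} \ldots g_{n+1}$, which equals $g_1 g_2 \ldots g_{n+1}$ in $G$. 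Summing the signed terms, $\delta f$ retains the autopoietic form. In dimension zero one checks $\delta(\lambda e)(g_1) = g_1 \lambda e - \lambda e g_1 = 0$, which is strictly autopoietic. This also discharges the forward reference made in the paragraph preceding Lemma \ref{AP-iso}.

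I do not foresee a real obstacle. The one point that requires care is the dimension-zero bookkeeping: the \emph{strictly} autopoietic variant is needed so that $AP(k[G]^{\ot 0})$ is precisely the complementary summand to $NP(k[G]^{\ot 0})$ inside ${\rm{Hom}}_k(k, \, k[G])$. Without the strictness clause every cochain in degree zero would vacuously be autopoietic and the intersection with $NP$ would be nontrivial, causing the proposed direct sum to fail.
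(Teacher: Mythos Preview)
Your proposal is correct and follows essentially the same argument as the paper: the paper defines the projection $\pi(f)(g_1 \ot \ldots \ot g_n) = \pi_h\bigl(f(g_1 \ot \ldots \ot g_n)\bigr)(g_1 \ldots g_n)$ with $h = g_1 \ldots g_n$, sets $f_1 = \pi(f)$ and $f_2 = f - f_1$, and observes that $f_2 \in NP$. Your version is in fact slightly more self-contained, since you verify $\delta(AP) \subseteq AP$ directly here rather than deferring it to \S3, and you spell out the uniqueness and the dimension-zero check explicitly.
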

\begin{proof}
Let $f \in {\rm{Hom}}_k(k[G]^{\ot n}, \, k[G])$.  Define 
\begin{align*}
&  \pi : {\rm{Hom}}_k(k[G]^{\ot n}, \, k[G]) \to AP(k[G]^{\ot n}) \\
& \pi (f)(g_1 \ot g_2 \ot \ldots \ot g_n) = \pi_h \big( f(g_1 \ot
\ldots \ot g_n) \big) (g_1 g_2 \ldots g_n), \\
& h = g_1 g_2 \ldots g_n \ {\rm{for}} \ n > 0, \ \ \ h = e  \
{\rm{for}} \ n = 0.
\end{align*} 
Let $f_1 = \pi (f)$ and $f_2 = f - f_1$.  Then $f_2 \in NP(k[G]^{\ot
  n})$ and $ f \mapsto f_1 \oplus f_2$ establishes the splitting
$$  {\rm{Hom}}_k(k[G]^{\ot *}, \, k[G]) \simeq AP(k[G]^{\ot *}) \oplus
NP(k[G]^{\ot *}).  $$
\end{proof}

In the next section we study $AP(k[G]^{\ot *})$ as an
$E_{\infty}$-subalgebra of 
$$ {\rm{Hom}}_k(k[G]^{\ot *}, \, k[G]) .  $$  
In general, however, $NP(k[G]^{\ot *})$ is not closed under the 
Gerstenhaber product.  We note in passing that the cochain map
$$  \Phi :  {\rm{Hom}}_k (k[G]^{\ot n} , \,  k[G]) \to ({\rm{Hom}}_k 
 (k[G]^{\ot (n+1)} , \, \, k)  $$
can be used to induce a different product structure on    
 $NP(k[G]^{\ot *})$, namely the simplicial cup product inherited from the
 cup product on the cochain complex
${\rm{Hom}}_k(k[G]^{\ot + 1}, \, k)$ as described in \cite{Lodder1}.
The simplicial cup product exploits the simplicial structure from the
cyclic bar construction.  In fact,  $NP(k[G]^{\ot *})$ inherits an action of
the cup-$i$ products via $\Phi$ \cite{Lodder1}.

We briefly describe a class of algebras whose Hochschild cohomology is
completely determined by autopoietic cochains, namely poset algebras.  
Let ${P} = \{i, \, j, \, \ldots \, \}$ be a finite poset of
cardinality $N$ containing no cycles and partial order denoted
$\preccurlyeq$.  Let $k$ be a commutative ring with unit $1 \in k$,
and let $A$ be the poset algebra of upper triangular matrices
with $k$-module basis given by $e_{ij}$, $i \preccurlyeq j$, subject to the
relations
$$  e_{ij} e_{k \ell} =  e_{i \ell}, \  j = k,  \ \ \
            e_{ij} e_{k \ell} = 0, \  j \neq k.   $$  
Note that $A$ can be considered as a category algebra $k[ {\cal{C}}]$,
where $\cal{C}$ is the category with objects given by the elements of
$P$ and morphisms given by
$$  {\rm{Mor}}(i, \ j) = \begin{cases} e_{ij}  & i \preccurlyeq j \\
                                       \emptyset  & {\rm{otherwise.}}
\end{cases}  $$  
Composition of morphisms (from left to right) agrees with the product
above.  

Note that $k[\cal{C}]$ contains a separable subalgebra
$$  E = \langle e_{11}, \ e_{22}, \ \ldots \, , \ e_{NN} \rangle ,$$
namely the $k$-module generated by the elements $e_{ii}$, $i = 1$, 2,
$\ldots \,$, $N$.  It is well known \cite{Gerstenhaber1983,
Gerstenhaber2014} that $HH^* (k [{\cal{C}}]; \, k[{\cal{C}}])$ can
be computed from a subcomplex of ${\rm{Hom}}_k( k[{\cal{C}}]^{\ot n}, \,
k[{\cal{C}}])$ given by $E$-relative cochains.  Specifically, a cochain 
$f \in {\rm{Hom}}_k( k[{\cal{C}}]^{\ot n}, \, k[{\cal{C}}])$, $n \geq
1$, is
$E$-relative if for composable (from left to right) morphisms
$e_{i_0 i_1}$, $e_{i_1 i_2}$, $\ldots\, $, $e_{i_{n-1} i_n}$,
$i_0 \preccurlyeq i_1 \preccurlyeq i_2 \preccurlyeq \ldots
\preccurlyeq i_n$, we have
$$  f(e_{i_0 i_1} \ot e_{i_1 i_2} \ot \ldots \ot e_{i_{n-1} i_n} =
(\lambda_{i_0, i_1, \ldots \, , i_n}) e_{i_0 i_n},   $$
where $\lambda_{i_0, i_1, \ldots \, , i_n} \in k$.  Moreover, for
morphisms $m_1$, $m_2$, $\ldots \,$, $m_n$ if $m_i$ is not composable
with $m_{i+1}$, then
$$  f( m_1 \ot m_2 \ot \ldots \ot m_n) = 0.  $$
Note that in $k[ {\cal{C}}]$, 
$e_{i_0 i_1} \cdot e_{i_1 i_2} \cdot \ldots \cdot e_{i_{n-1} i_n} = 
 e_{i_0 i_n}$
and $m_1 \cdot m_2 \cdot \ldots \cdot m_n = 0$ ($m_i$ not composable
with $m_{i+1}$).  Thus, an $E$-relative
cochain $f : k[ {\cal{C}} ] ^{\ot n} \to k[ {\cal{C}} ]$ satisfies the
condition to be autopoietic, Definition \eqref{autop}, although not strictly
autopoietic \eqref{st-autop}, since for $n = 0$, the complex ${\rm{Hom}}_k (k,
\, k [ {\cal{C}}])$ is replaced with
$$  {\rm{Hom}}_k ( k, \, k[{\rm{Obj}}({\cal{C}})] ),  $$
where ${\rm{Obj}}({\cal{C}})$ denotes the objects of the category
$\cal{C}$.

\section{Autopoietic Cochains}

Let $A$ be an associative algebra with unit $e \in A$ over a
commutative ring $k$ with unit $1 \in k$.  Recall that the Hochschild
cohomology of $A$ with coefficients in $A$, $HH^*(A; \, A)$, can be
computed from the standard $A \ot A^{\rm{op}}$ resolution of the
multiplication map $\mu : A \ot A \to A$ \cite[IX.6]{Cartan-Eilenberg}.
The resulting cochain complex is:  
\begin{align} \label{H-cochain-complex} 
& {\rm{Hom}}_k(k, \, A) \overset{\delta}{\lra} {\rm{Hom}}_k(A, \, A)
\overset{\delta}{\lra} \ldots \\
& \ldots  \overset{\delta}{\lra} 
{\rm{Hom}}_k(A^{\ot n}, \, A) \overset{\delta}{\lra} 
{\rm{Hom}}_k(A^{\ot (n+1)}, \, A) \overset{\delta}{\lra}  \ldots \, ,
\end{align}
where, for a $k$-linear map $f: A^{\ot n} \to A$, $\delta f : A^{\ot
  (n+1)} \to A$ is given by
\begin{align*}
& (\delta f)(a_1, \, a_2, \, \ldots \, , a_{n+1}) = a_1 f(a_2, \, \ldots
\, , a_{n+1})  \, + \\
& \Big ( \sum_{i=1}^n (-1)^i f(a_1, \, a_2, \, \ldots \, , a_i a_{i+1}, \,
\ldots \, , a_n) \Big) + (-1)^{n+1} f(a_1, \, a_2, \, \ldots \, , a_n)
a_{n+1}.  
\end{align*}
For the special case of $n = 0$ and $f \in {\rm{Hom}}_k (k, \, A)$, 
$(\delta f)(a_1) = a_1 f(1) - f(1)a_1$.

Now suppose that $A$ is a free $k$-module with $k$-module basis given
by
$$  {\cal{B}} = \{ \varphi_{\om} \ | \ \om \in \Omega \},  $$
where $\Omega$ is an arbitrary index set.  Furthermore, suppose that
the product in $A$ is induced by a product among the $\vp_{\om}$s with
$$  \vp_{\al} \vp_{\be} = \mu_{\al, \be} \, \vp_m,  $$
where $\vp_m \in {\cal{B}}$, $\mu_{\al, \be} \in k$, and $\mu_{\al,
  \be}$ is an idempotent, i.e., $\mu_{\al, \be}^2 = \mu_{\al, \be}$.
For some $\al$ and $\be$, it could happen that $\mu_{\al, \be} = 0$.  

\begin{definition} \label{autop}
For $n \geq 1$, a cochain $f \in {\rm{Hom}}_k(A^{\ot n}, \, A)$ is
autopoietic if for all $\vp_i \in {\cal{B}}$, we have
$$  f(\vp_1 \ot \vp_2 \ot \ldots \ot \vp_n) =
\lambda_{\vp_1, \ldots, \vp_n} (\vp_1 \vp_2 \ldots \vp_n),  $$
where $\lambda_{\vp_1, \ldots, \vp_n}$ is
an element of $k$ depending on $(\vp_1, \, \vp_2, \, \ldots \, , \,
\vp_n)$.
\end{definition}
\begin{definition} \label{st-autop}
For $n \geq 0$, a cochain $f \in {\rm{Hom}}_k(A^{\ot n}, \, A)$ is
strictly autopoietic if $f$ is autopoietic and for $f \in
{\rm{Hom}}_k(k, \, A)$, we have $f(1) = \lambda e$ for some $\lambda
\in k$.
\end{definition}

\begin{lemma}
The autopoietic cochains (and strictly autopoietic cochains) form a
subcomplex of ${\rm{Hom}}_k(A^{\ot *}, \, A)$.
\end{lemma}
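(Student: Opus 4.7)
The plan is to check directly that if $f \in {\rm{Hom}}_k(A^{\ot n}, \, A)$ is autopoietic for some $n \geq 1$, then $\delta f$ is again autopoietic, by evaluating $(\delta f)(\vp_1 \ot \vp_2 \ot \ld \ot \vp_{n+1})$ on an arbitrary $(n+1)$-tuple of basis elements and showing that each of the $n+2$ summands of the Hochschild coboundary is individually a $k$-multiple of the single element $\vp_1 \vp_2 \ld \vp_{n+1} \in A$; their sum then produces the required scalar coefficient.

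First I would handle the two outer summands. The leading term $\vp_1 f(\vp_2 \ot \ld \ot \vp_{n+1})$ becomes $\lambda_{\vp_2, \ld, \vp_{n+1}} \, \vp_1 (\vp_2 \ld \vp_{n+1}) = \lambda_{\vp_2, \ld, \vp_{n+1}} (\vp_1 \vp_2 \ld \vp_{n+1})$ by the autopoietic hypothesis on $f$ together with associativity in $A$, and the trailing term $(-1)^{n+1} f(\vp_1 \ot \ld \ot \vp_n) \vp_{n+1}$ is symmetric. For each interior summand $(-1)^i f(\vp_1 \ot \ld \ot \vp_i \vp_{i+1} \ot \ld \ot \vp_{n+1})$, I would write $\vp_i \vp_{i+1} = \mu_{\al_i, \al_{i+1}} \vp_m$, pull the scalar $\mu_{\al_i, \al_{i+1}}$ out by $k$-linearity of $f$, and apply the autopoietic property to $f(\vp_1 \ot \ld \ot \vp_m \ot \ld \ot \vp_{n+1})$. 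This yields
\begin{equation*}
\mu_{\al_i, \al_{i+1}} \, \lambda_{\vp_1, \ld, \vp_m, \ld, \vp_{n+1}} (\vp_1 \ld \vp_m \ld \vp_{n+1}),
\end{equation*}
and pushing the scalar $\mu_{\al_i, \al_{i+1}}$ back into the product via associativity collapses $\mu_{\al_i, \al_{i+1}} \vp_m$ back to $\vp_i \vp_{i+1}$, exhibiting the interior summand as a $k$-multiple of $\vp_1 \vp_2 \ld \vp_{n+1}$. Summing the $n+2$ contributions gives a single scalar $\lambda_{\delta f, \, \vp_1, \ld, \vp_{n+1}} \in k$, establishing autopoiety of $\delta f$.

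For the strictly autopoietic assertion I would separately check the bottom degree: if $f \in {\rm{Hom}}_k(k, \, A)$ satisfies $f(1) = \lambda e$, then $(\delta f)(a_1) = a_1 \lambda e - \lambda e a_1 = 0$, so $\delta f$ is the zero cochain and in particular autopoietic. In positive degrees strict autopoiety coincides with autopoiety, so nothing further is required.

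The only mildly delicate step is the treatment of the interior summands, where I must be careful that the scalar $\mu_{\al_i, \al_{i+1}}$ extracted when $\vp_i \vp_{i+1}$ is put into basis form can be reabsorbed into the full product $\vp_1 \ld \vp_{n+1}$. This uses only associativity in $A$ together with the $k$-linearity of $f$, and in particular does not require the idempotent condition $\mu_{\al, \be}^2 = \mu_{\al, \be}$; this is consistent with the remark in the introduction that the autopoietic subcomplex is defined without the idempotent hypothesis, which is needed only later for the $E_{\infty}$-structure and for the comparison with Steenrod's cup-one product.
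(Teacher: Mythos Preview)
Your proof is correct and follows essentially the same approach as the paper's own proof: both evaluate $(\delta f)$ on a basis tuple, treat the two outer summands directly, and for the interior summands write $\vp_i \vp_{i+1} = \mu_i \vp_{m_i}$, pull out the scalar by $k$-linearity, apply autopoiety to the resulting basis tuple, and reabsorb the scalar into the product. Your version is in fact slightly more explicit than the paper's about this last reabsorption step (the paper passes silently from the coefficient $\lambda_{\vp_1,\ldots,\vp_{m_i},\ldots,\vp_{n+1}}$ to the product $\vp_1\cdots\vp_i\vp_{i+1}\cdots\vp_{n+1}$), and your observation that the idempotent condition plays no role here is correct and matches the remark in the paper's introduction.
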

\begin{proof}
Let $f \in {\rm{Hom}}_k(A^{\ot n}, \, A)$ be (strictly) autopoietic.
For $\vp_i$, $\vp_{i+1} \in {\cal{B}}$, let
$\vp_i \vp_{i+1} = \mu_i \cdot \vp_{m_i}$.  Then 
\begin{align*}
& (\delta f) (\vp_1 \ot \vp_2 \ot \ldots \ot \vp_{n+1}) =
\vp_1 \cdot f(\vp_2 \ot \ldots \ot \vp_{n+1})  \\
& + \Big( \sum_{i=1}^n (-1)^i f(\vp_1 \ot \ldots \ot \vp_i \vp_{i+1} \ot
\ldots \ot \vp_{n+1}) \Big) \\ 
& + (-1)^{n+1} f (\vp_1 \ot \ldots \ot \vp_n)
\cdot \vp_{n+1}  \\
& = \lambda_{\vp_2, \ldots , \vp_{n+1}} (\vp_1 \vp_2 \ldots \vp_{n+1}) \\
& + \sum_{i=1}^n (-1)^i \lambda_{\vp_1, \ldots, \vp_{m_i}, \ldots ,
  \vp_{n+1}} (\vp_1 \vp_2 \ldots \vp_i \vp_{i+1} \ldots \vp_{n+1}) \\
& + (-1)^{n+1} \lambda_{\vp_1, \ldots , \vp_n} (\vp_1 \ldots \vp_n
\vp_{n+1}) \\
& = \lambda_{\vp_1 , \vp_2, \ldots , \vp_{n+1}} (\vp_1 \vp_2 \ldots \vp_{n+1})
\end{align*}
for some $\lambda_{\vp_1 , \vp_2, \ldots , \vp_{n+1}} \in k$.  
\end{proof}

\begin{lemma}
The complex of (strictly) autopoietic cochains is closed under the
Gerstenhaber product and the pre-Lie product.
\end{lemma}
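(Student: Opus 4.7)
The plan is to verify the autopoietic condition directly on basis inputs $\vp_1 \ot \vp_2 \ot \ld \ot \vp_p$, leveraging the key structural fact that in $A$ any product of basis elements is itself a scalar multiple of a single basis element. Concretely, iterating $\vp_\al \vp_\be = \mu_{\al,\be} \vp_{m(\al,\be)}$ and using associativity, one obtains for every tuple a scalar $\rho_{\vp_1,\ld,\vp_p} \in k$ and basis element $\vp_{M(\vp_1,\ld,\vp_p)} \in {\cal B}$ with $\vp_1 \vp_2 \ld \vp_p = \rho_{\vp_1,\ld,\vp_p} \, \vp_M$. This is the observation I would propagate through both product computations.

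For the Gerstenhaber cup product, given $f \in AP(A^{\ot m})$ and $g \in AP(A^{\ot n})$, I would write
\[
(f \cup g)(\vp_1 \ot \ld \ot \vp_{m+n}) = f(\vp_1 \ot \ld \ot \vp_m) \cdot g(\vp_{m+1} \ot \ld \ot \vp_{m+n}),
\]
substitute $f(\vp_1 \ot \ld \ot \vp_m) = \lambda_f (\vp_1 \ld \vp_m)$ and $g(\vp_{m+1} \ot \ld \ot \vp_{m+n}) = \lambda_g (\vp_{m+1} \ld \vp_{m+n})$, and use associativity of $A$ to collapse the product to $\lambda_f \lambda_g (\vp_1 \vp_2 \ld \vp_{m+n})$, which is autopoietic.

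For the pre-Lie composition $f \circ g = \sum_{i=1}^m (-1)^{(n-1)(i-1)} f \circ_i g$, I would evaluate a single summand on $(\vp_1 \ot \ld \ot \vp_{m+n-1})$. The autopoietic property of $g$ turns the internal slot into $\lambda_g (\vp_i \ld \vp_{i+n-1}) = \lambda_g \mu_i \vp_{M_i}$ for some basis $\vp_{M_i}$ and $\mu_i \in k$. Pulling the scalar outside by linearity and applying the autopoietic property of $f$ to the basis tuple $(\vp_1, \ld, \vp_{i-1}, \vp_{M_i}, \vp_{i+n}, \ld, \vp_{m+n-1})$ gives
\[
\lambda_f \lambda_g \mu_i \bigl(\vp_1 \ld \vp_{i-1}\, \vp_{M_i}\, \vp_{i+n} \ld \vp_{m+n-1}\bigr).
\]
Reabsorbing $\mu_i \vp_{M_i} = \vp_i \ld \vp_{i+n-1}$ and using associativity recognizes this as $\lambda_f \lambda_g (\vp_1 \vp_2 \ld \vp_{m+n-1})$. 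Summing the signed contributions yields a single scalar times $\vp_1 \ld \vp_{m+n-1}$, so $f \circ g$ is autopoietic.

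The main obstacle is the strictly autopoietic case in degree zero. When $g \in \text{Hom}_k(k, A)$ with $g(1) = \lambda e$, the composition $f \circ_i g$ substitutes $\lambda e$ into the $i$th slot of $f$. If $e \in {\cal B}$ this is immediate, but in the poset/category algebra setting $e$ decomposes as $\sum_j e_{jj}$, and one must invoke the extra vanishing condition on non-composable morphisms noted at the end of \S 2 to see that only the diagonal term surviving in the relevant slot contributes. A parallel check handles $f$ in degree zero appearing as $e \cdot g(\vp_1, \ld)$ or $g(\vp_1, \ld) \cdot e$ in the cup product. I would also remark that the entire argument uses only associativity, never the idempotence $\mu_{\al,\be}^2 = \mu_{\al,\be}$, consistent with the introduction's remark that idempotence is needed only for the comparison with Steenrod's cup-one product, not for closure of $AP(A^{\ot *})$ under the Gerstenhaber or pre-Lie operations.
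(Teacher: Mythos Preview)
Your argument is correct and follows the same direct verification on basis tuples as the paper: substitute the autopoietic form of $f$ and $g$, use that a product of basis elements is a scalar times a basis element, and collapse via associativity. You are in fact slightly more careful than the paper---you make explicit the step $\vp_{j+1}\cdots\vp_{j+q}=\mu\,\vp_M$ before applying $f$ (the paper leaves this implicit), and you flag the degree-zero strictly autopoietic case, which the paper's proof does not address at all; your remark that idempotence is not used here is also consistent with the paper's proof.
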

\begin{proof}
Let $f \in {\rm{Hom}}_k(A^{\ot p}, \, A)$ and  
$g \in {\rm{Hom}}_k(A^{\ot q}, \, A)$ be (strictly) autopoietic.  Then
the Gerstenhaber product $f \underset{G}{\cdot} g \in
{\rm{Hom}}_k(A^{\ot (p+q)}, \, A)$ is determined by
\begin{align*}
& (f \underset{G}{\cdot} g) (\vp_1 \ot \ldots \ot \vp_p \ot \vp_{p+1} \ot
\ldots \ot \vp_{p+q}) \\
& = f(\vp_1 \ot \ldots \ot \vp_p) \cdot g(\vp_{p+1} \ot \ldots \ot
\vp_{p+q}) \\
& = (\lambda_{\vp_1 , \ldots , \vp_n}) 
(\lambda_{\vp_{p+1}, \ldots , \vp_{p+q}}) 
(\vp_1 \ldots \vp_p)(\vp_{p+1} \ldots \vp_{p+q}) \\
& = (\lambda_{\vp_1, \ldots , \vp_{p+q}})(\vp_1 \vp_2 \ldots
\vp_{p+q})
\end{align*}
for some $\lambda_{\vp_1, \ldots , \vp_{p+q}} \in k$.  Thus, 
$f \underset{G}{\cdot} g$ is (strictly) autopoietic.

For $j = 0$, 1, 2, $\ldots \,$, $p-1$, the $j$th partial composition 
$$  f \underset{(j)}{\circ} g \in {\rm{Hom}}_k(A^{\ot (p+q-1)}, \, A)  $$ 
is determined by
\begin{align*}
& f \underset{(j)}{\circ} g (\vp_1 \ot \vp_2 \ot \ldots \ot
\vp_{p+q-1}) \\
& = f(\vp_1 \ot \ldots \ot \vp_j \ot g(\vp_{j+1} \ot \ldots \ot \vp_{j+q})
\ot \vp_{j+q+1} \ot \ldots \vp_{p+q-1}) \\
& = (\lambda_{\vp_{j+1}, \ldots , \vp_{j+q}}) 
f(\vp_1 \ot \ldots \ot \vp_j \ot (\vp_{j+1} \ldots \vp_{j+q}) \ot
\vp_{j+q+1} \ot \ldots \ot \vp_{p+q-1}) \\
& = \lambda_{\vp_1, \ldots , \vp_{p+q-1}} (\vp_1 \vp_2 \ldots \vp_{p+q-1}) 
\end{align*} 
for some $\lambda_{\vp_1, \ldots , \vp_{p+q-1}} \in k$.  Since each
partial composition $f \underset{(j)}{\circ} g$ is autopoietic, it
follows that the pre Lie-product
\begin{equation}  \label{pre-Lie}
f \circ g = \sum_{j=0}^{p-1}
(-1)^{(p-1-j)(q-1)} f \underset{(j)}{\circ} g 
\end{equation}
is autopoietic (with any sign convention).
\end{proof}

Thus, the autopoietic cochains are closed under the Gerstenhaber
product and the partial compositions of the endomorphism operad as
well.  Using McClure and Smith's \cite{McClure-Smith} description of the
sequence operad, it follows quickly that the autopoietic cochains are closed
under the $E_2$-operad given by sequences of complexity less than or
equal to two.  We, however, study an $E_{\infty}$-structure on
autopoietic cochains arising from Steenrod's cup-$i$ products.

Let $\{ A^{\ot n} \}$ be the bar construction on $A$ with
face maps $d_i : A^{\ot n} \to A^{\ot (n-1)}$, $n \geq 2$, $i = 0$, 1,
$\ldots \,$, $n$, given by
\begin{equation} \label{bar-construction}
d_i(a_1 \ot a_2 \ot \ldots \ot a_n) = \begin{cases}
a_2 \ot a_3 \ot \ldots \ot a_n, & i = 0, \\
a_1 \ot \ldots \ot a_i a_{i+1} \ot \ldots \ot a_n, &  1 \leq i \leq 
 n-1, \\
a_1 \ot a_2 \ot \ldots \ot a_{n-1}, & i = n.
\end{cases}
\end{equation}
At present we do not require that $d_0$, $d_1: A \to k$ be given, but
these will be included when they exist (such as for group rings or
category algebras).  Let $d: A^{\ot n} \to A^{\ot (n-1)}$ be the
boundary operator $d = \sum_{i=0}^n (-1)^i d_i$.  The coboundary map
$$  d^* : {\rm{Hom}}_k(A^{\ot (n-1)}, \, k) \to
{\rm{Hom}}_k(A^{\ot n}, \, k)  $$
is given by the Hom-$k$ dual of $d$.  

Consider $A$ as a free
$k$-module with basis ${\cal{B}} = \{ \varphi_{\om} \ | \ \om \in \Omega \}$
as above.  Let $AP(A^{\ot *}, \, A)$ denote the subcomplex of
(strictly) autopoietic Hochschild cochains.  There is $k$-module map
$$  \Psi : {\rm{Hom}}_k (A^{\ot n}, \, k) \to
AP(A^{\ot n}, \, A)  $$
induced by
$$  \Psi ( \al ) (\vp_1 \ot \vp_2 \ot \ldots \ot \vp_n) =
\al (\vp_1 \ot \ldots \ot \vp_n) \cdot (\vp_1 \vp_2 \ldots \vp_n),  $$
where $\al \in {\rm{Hom}}_k (A^{\ot n}, \, k)$.  There is also a
$k$-module map
$$  \Phi : AP(A^{\ot n}, \, A) \to {\rm{Hom}}_k (A^{\ot n}, \, k)  $$
induced by
$$  \Phi (f)(\vp_1 \ot \vp_2 \ot \ldots \ot \vp_n) = \begin{cases}
\lambda_{\vp_1, \ldots , \vp_n}, & \vp_1 \vp_2 \ldots \vp_n \neq 0 \\
0, & \vp_1 \vp_2 \ldots \vp_n = 0, \end{cases}  $$
where $f \in AP(A^{\ot n}, \, A)$ and 
$f (\vp_1 \ot \vp_2 \ot \ldots \ot \vp_n) = \lambda_{\vp_1, \ldots ,
\vp_n} (\vp_1 \vp_2 \ldots \vp_n)$.  When $A$ is a group ring or a
polynomial algebra, $\Psi$ and $\Phi$ are $k$-module isomorphisms. 
\begin{lemma}
For $n \geq 1$, we have a commutative diagram
$$ \CD
AP(A^{\ot n}, \, A)  @>{\delta}>>  AP(A^{\ot (n+1)}, \, A)  \\
@A{\Psi}AA   @AA{\Psi}A  \\
{\rm{Hom}}_k( A^{\ot n}, \, k) @>{d^*}>> {\rm{Hom}}_k(A^{\ot (n+1)},\,
k).
\endCD  $$
\end{lemma}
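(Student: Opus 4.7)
The plan is to verify $\delta \circ \Psi = \Psi \circ d^*$ by evaluating both sides on an arbitrary basis tensor $\vp_1 \ot \vp_2 \ot \ldots \ot \vp_{n+1}$ for a given $\alpha \in {\rm{Hom}}_k(A^{\ot n}, k)$; $k$-linearity of all the maps in play together with freeness of $A$ as a $k$-module on ${\cal{B}}$ reduces the check to this case. Both sides will produce a sum of $n+2$ terms indexed by the bar-construction face maps $d_0, d_1, \ldots, d_{n+1}$, and each term will have the shape $\lambda \cdot (\vp_1 \vp_2 \ldots \vp_{n+1})$ with $\lambda \in k$. The strategy is to match them index by index.

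First I expand $(\delta \Psi(\alpha))(\vp_1 \ot \ldots \ot \vp_{n+1})$ using the Hochschild coboundary. The two outer terms $\vp_1 \cdot \Psi(\alpha)(\vp_2 \ot \ldots \ot \vp_{n+1})$ and $\Psi(\alpha)(\vp_1 \ot \ldots \ot \vp_n) \cdot \vp_{n+1}$ collapse, by associativity in $A$, to $\alpha(\vp_2 \ot \ldots \ot \vp_{n+1}) \cdot (\vp_1 \vp_2 \ldots \vp_{n+1})$ and $\alpha(\vp_1 \ot \ldots \ot \vp_n) \cdot (\vp_1 \vp_2 \ldots \vp_{n+1})$. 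For each interior term I write $\vp_i \vp_{i+1} = \mu_i \vp_{m_i}$, pull $\mu_i$ outside $\Psi(\alpha)$ by $k$-linearity, and then reabsorb it using associativity in the form $\mu_i \cdot (\vp_1 \ldots \vp_{m_i} \ldots \vp_{n+1}) = \vp_1 \ldots (\mu_i \vp_{m_i}) \ldots \vp_{n+1} = \vp_1 \vp_2 \ldots \vp_{n+1}$. This delivers the interior contribution $(-1)^i \alpha(\vp_1 \ot \ldots \ot \vp_{m_i} \ot \ldots \ot \vp_{n+1}) \cdot (\vp_1 \vp_2 \ldots \vp_{n+1})$. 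Expanding $\Psi(d^* \alpha)(\vp_1 \ot \ldots \ot \vp_{n+1}) = (d^*\alpha)(\vp_1 \ot \ldots \ot \vp_{n+1}) \cdot (\vp_1 \vp_2 \ldots \vp_{n+1})$ gives the same two outer terms, but each interior contribution carries an extra $\mu_i$, namely $(-1)^i \mu_i \alpha(\vp_1 \ot \ldots \ot \vp_{m_i} \ot \ldots \ot \vp_{n+1}) \cdot (\vp_1 \vp_2 \ldots \vp_{n+1})$, because on this side the $\mu_i$ coming from $\vp_i \vp_{i+1}$ is extracted by $k$-linearity of $\alpha$ into the scalar coefficient rather than into the $A$-factor.

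The crux is the identity $\mu_i \cdot (\vp_1 \vp_2 \ldots \vp_{n+1}) = \vp_1 \vp_2 \ldots \vp_{n+1}$, which is precisely where the idempotency hypothesis $\mu_i^2 = \mu_i$ is needed: rewrite $\vp_1 \vp_2 \ldots \vp_{n+1} = \mu_i \cdot (\vp_1 \ldots \vp_{m_i} \ldots \vp_{n+1})$ by associativity, multiply by $\mu_i$, and apply $\mu_i^2 = \mu_i$ to eliminate the redundant factor. With this identity the surplus $\mu_i$ in the $\Psi(d^* \alpha)$ side can be absorbed into $\vp_1 \vp_2 \ldots \vp_{n+1}$, and the two expansions agree term by term. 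The main obstacle is notational rather than conceptual, namely keeping clean track of whether each $\mu_i$ currently lives in the scalar coefficient or inside the $A$-valued product; the only nontrivial input beyond associativity and $k$-linearity is the idempotency of $\mu_i$, which is exactly the hypothesis highlighted in the introduction as the bridge between $A$-valued cochains and $k$-valued cochains on the bar construction.
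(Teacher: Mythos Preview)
Your proof is correct and follows the same route as the paper: evaluate both $\delta\circ\Psi$ and $\Psi\circ d^*$ on a basis tensor, observe that the two outer terms match immediately by associativity, and handle the interior terms by writing $\vp_i\vp_{i+1}=\mu_i\vp_{m_i}$ and comparing the resulting scalar multiples of $\vp_1\cdots\vp_{n+1}$. Your treatment is in fact more explicit than the paper's at the key step: the paper passes silently from $\mu_i\,\al(\ldots\vp_{m_i}\ldots)(\vp_1\cdots\vp_{m_i}\cdots\vp_{n+1})$ to the form $C_2\cdot(\vp_1\cdots\vp_{n+1})$, whereas you isolate precisely the identity $\mu_i\cdot(\vp_1\cdots\vp_{n+1})=\vp_1\cdots\vp_{n+1}$ and show that it is equivalent to $\mu_i^2=\mu_i$.
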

\begin{proof}
Let $\al \in {\rm{Hom}}_k(A^{\ot n}, \, k)$.  For ease of notation,
set $\vp_i \vp_{i+1} = \mu_i \, \vp_{m_i}$, where $\mu_i \in k$ and
$\mu_i^2 = \mu_i$.  Then
\begin{align*}
& (\delta \circ \Psi )(\al)(\vp_1 \ot \vp_2 \ot \ldots \ot \vp_{n+1})\\
& = \vp_1 \, \Psi (\al) (\vp_2 \ot \ldots \ot \vp_{n+1}) \\
& + \sum_{i=1}^n (-1)^i \Psi (\al)(\vp_1 \ot \ldots \ot \vp_i
\vp_{i+1} \ot \ldots \ot \vp_{n+1})  \\
& + (-1)^{n+1} \Psi (\al)(\vp_1 \ot \ldots \ot \vp_n) \vp_{n+1} \\
& = \al(\vp_2 \ot \ldots \ot \vp_{n+1}) \vp_1 \vp_2 \ldots \vp_{n+1} \\
& + \sum_{i=1}^n (-1)^i \mu_i \al( \vp_1 \ot \ldots \ot \vp_{m_i} \ot
\ldots \vp_{n+1}) \, \vp_1 \vp_2 \ldots \vp_{m_i} \ldots \vp_{n+1}  \\
& + (-1)^{n+1} \al (\vp_1 \ot \ldots \ot \vp_n) \vp_1 \vp_2 \ldots
\vp_{n+1} \\
& = (C_1 + C_2 + C_3) \, \vp_1 \vp_2 \ldots \vp_{i} \vp_{i+1} \ldots \vp_{n+1},
\ \ \ {\rm{where}} \\
& \ \ \ \ \ C_1 = \al (\vp_2 \ot \ldots \ot \vp_{n+1}), \ \ \ 
C_3 = (-1)^{n+1} \al (\vp_1 \ot \ldots \ot \vp_n),  \\
& \ \ \ \ \ C_2 = \sum_{i=1}^n (-1)^{i}  \al(\vp_1 \ot \ldots \ot \vp_i \vp_{i+1}
\ot \ldots \ot \vp_{n+1}).  
\end{align*}
On the other hand,
\begin{align*}
& (\Psi \circ d^* ) (\al)(\vp_1 \ot \vp_2 \ot \ldots \ot \vp_{n+1}) \\
& = \Psi \big( d^*(\al) \big) (\vp_1 \ot \vp_2 \ot \ldots \ot
\vp_{n+1}) \\
& = d^*(\al) (\vp_1 \ot \vp_2 \ot \ldots \ot \vp_{n+1}) \, \vp_1 \vp_2
\ldots \vp_{n+1} \\
& = (C_1 + C_2 + C_3) \, \vp_1 \vp_2 \ldots \vp_{n+1}.  
\end{align*}
\end{proof}
Note that for strictly autopoietic cochains and $\{ A^{\ot n} \}_{n
  \geq 0}$ a simplicial $k$-module with $d_0 = d_1 : A \to k$, the
following commutes
$$  \CD
AP(k, \, A)  @>{\delta}>>  AP(A, \, A)  \\
@A{\Psi}AA   @AA{\Psi}A  \\
{\rm{Hom}}_k(k , \, k) @>{d^*}>> {\rm{Hom}}_k(A, \, k).
\endCD  $$
This follows since $d^* = 0$ and $\delta = 0$ in the above special
case.

Using the simplicial structure of $A^{\ot *}$, the simplicial cup
product can be defined on cochains ${\rm{Hom}}_k(A^{\ot *}, \, k)$.
Let $\al \in {\rm{Hom}}_k(A^{\ot p}, \, k)$, 
$\be \in {\rm{Hom}}_k(A^{\ot q}, \, k)$, and
$$  \sigma = \vp_1 \ot \vp_2 \ot \ldots \ot \vp_{p+q} \in A^{\ot (p+q)}.  $$
Then $\al \underset{S}{\cdot} \be \in {\rm{Hom}}_k(A^{\ot (p+q)}, \,
k)$ is given by
$$  (\al \underset{S}{\cdot} \be) (\sigma) =
\al (\big( f_p (\sigma) \big) \, \be \big( b_q (\sigma) \big),  $$
where $f_p (\sigma)$ denotes the front $p$-face of $\sigma$ and $b_q
(\sigma)$ denotes the back $q$-face of $\sigma$.  Specifically,
$$   (\al \underset{S}{\cdot} \be) (\sigma) =
\al (\vp_1 \ot \ldots \ot \vp_p) \be (\vp_{p+1} \ot \ldots \ot \vp_{p+q})
\in k.  $$
\begin{lemma}
The cochain map
$ \Psi : {\rm{Hom}}_k (A^{\ot *}, \, k) \to AP(A^{\ot *} , \, A)  $
is a map of (differential graded) algebras, i.e.,
$$  \Psi (\al \underset{S}{\cdot} \be) =
\Psi (\al) \underset{G}{\cdot} \Psi (\be).  $$
\end{lemma}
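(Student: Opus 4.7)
The plan is to verify the identity by evaluating both sides on an arbitrary pure tensor $\vp_1 \ot \vp_2 \ot \ld \ot \vp_{p+q}$ of basis elements and showing the resulting elements of $A$ coincide. Since both $\Psi(\al \underset{S}{\cdot}\be)$ and $\Psi(\al)\underset{G}{\cdot}\Psi(\be)$ are $k$-linear, it suffices to check equality on such basis tensors.

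First I would compute the left-hand side by unfolding the definitions: $\Psi(\al \underset{S}{\cdot} \be)(\vp_1 \ot \ld \ot \vp_{p+q})$ equals the scalar $(\al \underset{S}{\cdot}\be)(\vp_1 \ot \ld \ot \vp_{p+q})$ multiplied by the product $\vp_1 \vp_2 \ld \vp_{p+q}$ in $A$. By the formula for the simplicial cup product, that scalar factors as $\al(\vp_1 \ot \ld \ot \vp_p)\, \be(\vp_{p+1} \ot \ld \ot \vp_{p+q})$. Next I would expand the right-hand side similarly: the Gerstenhaber product yields $\Psi(\al)(\vp_1 \ot \ld \ot \vp_p) \cdot \Psi(\be)(\vp_{p+1} \ot \ld \ot \vp_{p+q})$, and plugging in the definition of $\Psi$ on each factor produces the scalar $\al(\vp_1 \ot \ld \ot \vp_p)\,\be(\vp_{p+1} \ot \ld \ot \vp_{p+q})$ multiplied by $(\vp_1 \ld \vp_p)(\vp_{p+1} \ld \vp_{p+q})$ in $A$.

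At this point both sides are the same scalar times elements of $A$ that differ only by associative parenthesization. Invoking associativity of the product on $A$, we have $(\vp_1 \ld \vp_p)(\vp_{p+1} \ld \vp_{p+q}) = \vp_1 \vp_2 \ld \vp_{p+q}$, so the two $A$-valued expressions coincide and the identity follows. Note that no idempotency hypothesis on the structure constants $\mu_{\al,\be}$ is needed at this step; the scalars produced by successive multiplications in $A$ are absorbed uniformly on both sides, and associativity alone closes the computation. Consequently the identity also persists when some intermediate product vanishes, in which case both sides are zero.

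There is no genuine obstacle: the entire argument is a direct unwinding of definitions together with associativity in $A$. The only points worth flagging for completeness are (i) to remark that the result assembles from $k$-linearity into the full statement on arbitrary tensors, and (ii) to observe that compatibility with the differentials was already established in the preceding lemma, so the map is a genuine morphism of differential graded algebras (not merely graded algebras).
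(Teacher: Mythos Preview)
Your proposal is correct and follows essentially the same approach as the paper: both evaluate each side on a basis tensor $\vp_1\ot\ldots\ot\vp_{p+q}$, unfold the definitions of $\Psi$, the simplicial cup product, and the Gerstenhaber product, and observe that the resulting expressions coincide by associativity of multiplication in $A$. Your additional remarks (that idempotency is not used here, that $k$-linearity extends the identity to arbitrary tensors, and that the differential compatibility was handled in the preceding lemma) are accurate and welcome clarifications, but they do not alter the argument.
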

\begin{proof} 
We have
\begin{align*}
& \Psi (\al \underset{S}{\cdot} \be) (\sigma) =
(\al \underset{S}{\cdot} \be) (\sigma) \, \vp_1 \vp_2 \ldots \vp_{p+q} \\
& = \al (\vp_1 \ot \ldots \ot \vp_p) \be (\vp_{p+1} \ot \ldots \ot \vp_{p+q})
\, \vp_1 \vp_2 \ldots \vp_{p+q}.
\end{align*}
On the other hand,
\begin{align*}
& \Psi (\al) \underset{G}{\cdot} \Psi (\be) (\sigma)  \\
& = \Psi (\al)(\vp_1 \ot \ldots \ot \vp_p) \, \Psi (\be) (\vp_{p+1} \ot
\ldots \ot \vp_{p+q}) \\
& = \al (\vp_1 \ot \ldots \ot \vp_p) \, \be (\vp_{p+1} \ot \ldots \ot \vp_{p+q})
\, \vp_1 \vp_2 \ldots \vp_{p+q}.
\end{align*} 
\end{proof}

Recall that ${\rm{Hom}}_k( A^{\ot *}, \, k)$ is a graded homotopy
commutative algebra with the chain homotopy between 
$\al \underset{S}{\cdot} \be$ and $\be \underset{S}{\cdot} \al$ given
in term of Steenrod's cup-one product \cite{Mosher}.  Of course, 
${\rm{Hom}}_k( A^{\ot *}, \, k)$ is an $E_{\infty}$-algebra with the
higher homotopies given by the cup-$i$ products \cite{Mosher}.  We
choose the following sign convention for the cup-one product
$$  \al \underset{1, \, S}{\cdot} \be \in 
{\rm{Hom}}_k(A^{\ot (p+q-1)}, \, k).  $$
Let $\sigma = \vp_1 \ot \vp_2 \ot \ldots \ot \vp_{p+q-1} \in A^{\ot
  (p+q-1)}$.  Set
\begin{align*}
& (\al \underset{1, \, S}{\cdot} \be)_j (\sigma)  = A_j \, B_j  ,\\
& A_j = \al ( (d_{j+1}  d_{j+2} \,  \ldots  \, d_{j+q-1}) (\sigma) ), \\
& B_j = \be( (d_0  d_1 \,  \ldots \,  d_{j-1} \,
d_{j+q+1}  d_{j+q+2}  \, \ldots \,  d_{p+q-1}) (\sigma) ) , \\
& (\al \underset{1, \, S}{\cdot} \be)(\sigma)  =
\sum_{j=0}^{p-1} (-1)^{(p-1-j)(q-1)} \, 
(\al \underset{1, \, S}{\cdot} \be)_j (\sigma) .
\end{align*}
\begin{theorem}
The cochain map
$ \Psi : {\rm{Hom}}_k(A^{\ot *}, \, k) \to AP(A^{\ot *}, \, A)  $
is a map of graded homotopy commutative algebras, i.e.,
$  \Psi (\al \underset{1, \, S}{\cdot} \be) = 
\Psi (\al) \circ \Psi (\be)$.
\end{theorem}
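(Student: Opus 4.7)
The plan is to reduce the identity $\Psi(\al \underset{1, \, S}{\cdot} \be) = \Psi(\al) \circ \Psi(\be)$ to a termwise comparison. Both sides are signed sums indexed by $j = 0, 1, \ld, p-1$ with exactly the same sign $(-1)^{(p-1-j)(q-1)}$, so it suffices to establish
$$\Psi\!\bigl((\al \underset{1, \, S}{\cdot} \be)_j\bigr)(\sigma) = \Psi(\al) \underset{(j)}{\circ} \Psi(\be)(\sigma)$$
for each $j$ and each basis tensor $\sigma = \vp_1 \ot \vp_2 \ot \ld \ot \vp_{p+q-1}$, after which summing recovers the full claim.

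For the left-hand side, I would first unpack the face-map composition. Tracking which of the vertices $v_0, v_1, \ld, v_{p+q-1}$ survive each face operator, the composite $d_{j+1} d_{j+2} \cdots d_{j+q-1}$ merges the middle block $\vp_{j+1}, \ld, \vp_{j+q}$ into their single product in $A$, while $d_0 d_1 \cdots d_{j-1} d_{j+q+1} \cdots d_{p+q-1}$ restricts $\sigma$ to precisely that middle block. Hence $A_j = \al(\vp_1 \ot \cdots \ot \vp_j \ot (\vp_{j+1} \cdots \vp_{j+q}) \ot \vp_{j+q+1} \ot \cdots \ot \vp_{p+q-1})$ and $B_j = \be(\vp_{j+1} \ot \cdots \ot \vp_{j+q})$. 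By the definition of $\Psi$ the left-hand side becomes $A_j B_j \cdot P$, where $P := \vp_1 \vp_2 \cdots \vp_{p+q-1}$ is the total product in $A$.

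For the right-hand side, I would expand $\vp_{j+1} \cdots \vp_{j+q} = \mu_j \, \vp_{m_j}$, with $\vp_{m_j} \in \cal{B}$ and $\mu_j \in k$ the accumulated product of the pairwise scalars $\mu_{\al, \be}$ arising in the successive multiplications; since $k$ is commutative, a product of idempotents is again idempotent, so $\mu_j^2 = \mu_j$. Then $\Psi(\be)$ applied to the middle block equals $B_j \mu_j \vp_{m_j}$, and unwinding the definition of $\Psi(\al)$ yields
$$\Psi(\al) \underset{(j)}{\circ} \Psi(\be)(\sigma) = B_j \mu_j \cdot \al\bigl(\vp_1 \ot \cdots \ot \vp_j \ot \vp_{m_j} \ot \vp_{j+q+1} \ot \cdots \ot \vp_{p+q-1}\bigr) \cdot P',$$
where $P' := \vp_1 \cdots \vp_j \cdot \vp_{m_j} \cdot \vp_{j+q+1} \cdots \vp_{p+q-1}$.

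The main obstacle — and the place where the idempotence assumption becomes essential — is reconciling $P$ with $P'$. By construction $P = \mu_j P'$, so the two products differ by the scalar $\mu_j$. The resolution is to observe that $k$-linearity of $\al$ gives $A_j = \mu_j \cdot \al(\vp_1 \ot \cdots \ot \vp_j \ot \vp_{m_j} \ot \vp_{j+q+1} \ot \cdots \ot \vp_{p+q-1})$, so $\mu_j A_j = \mu_j^2 \cdot \al(\cdots) = \mu_j \cdot \al(\cdots) = A_j$, and the right-hand side simplifies to $A_j B_j P'$. Consequently
$$A_j B_j P = A_j B_j \mu_j P' = (\mu_j A_j) B_j P' = A_j B_j P',$$
which completes the termwise identification; summing with the common sign $(-1)^{(p-1-j)(q-1)}$ finishes the proof. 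Without the hypothesis $\mu_{\al, \be}^2 = \mu_{\al, \be}$ this final manipulation breaks and the two sides disagree by a factor of $\mu_j$ in each summand, which is consistent with the paper's remark that the idempotent condition is precisely what is required to compare Steenrod's cup-one product with Gerstenhaber's pre-Lie product.
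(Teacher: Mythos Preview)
Your proof is correct and follows essentially the same route as the paper: a termwise comparison of $\Psi((\al \underset{1,S}{\cdot} \be)_j)$ with $\Psi(\al) \underset{(j)}{\circ} \Psi(\be)$, expanding both sides on a basis tensor and using the idempotence $\mu_j^2 = \mu_j$ to reconcile the total product $P$ with $P'$. You even supply a detail the paper leaves implicit, namely that the accumulated scalar $\mu_j$ is itself idempotent because $k$ is commutative and products of commuting idempotents are idempotent.
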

\begin{proof}
We show that for $j = 0$, 1, 2, $\ldots \,$, $p-1$, 
$ \Psi  (\al \underset{1, \, S}{\cdot} \be)_j (\sigma)  =
\Psi (\al) \underset{(j)}{\circ} \Psi (\be)$.  
We have
\begin{align*}
& \Psi ( (\al \underset{1, \, S}{\cdot} \be)_j) (\sigma) \\
& = (\al \underset{1, \, S}{\cdot} \be)_j (\sigma) \, \vp_1 \vp_2
\ldots \vp_{p+q-1}  \\
& = A_j B_j \,  \vp_1 \vp_2 \ldots \vp_{p+q-1}, \ \ {\rm{where}} \\
& A_j = \al \big( \vp_1 \ot \vp_2 \ldots \ot (\vp_{j+1}\vp_{j+2} \ldots
\vp_{j+q}) \ot \vp_{j+q+1} \ot \ldots \ot \vp_{p+q-1} \big) \in k , \\
& B_j = \be (\vp_{j+1} \ot \vp_{j+2} \ot \ldots \ot \vp_{j+q}) \in k .
\end{align*}
On the other hand,
\begin{align*}
& \Psi (\al) \underset{(j)}{\circ} \Psi (\be) (\sigma) \\ 
& = \Psi (\al) (\vp_1 \ot \vp_2 \ot \ldots \ot C \ot \vp_{j+q+1} \ot
\ldots \ot \vp_{p+q-1}), \\
& \ \ \ \ C = \Psi (\be) (\vp_{j+1} \ot \vp_{j+2} \ot \ldots \ot \vp_{j+q}) 
= B_j \, \vp_{j+1} \vp_{j+2} \ldots \vp_{j+q}.
\end{align*}
Let $\vp_{j+1} \vp_{j+2} \ldots \vp_{j+q} = \mu \, \vp_m$, where $\mu \in
k$.  Then $\mu^2 = \mu$ and
\begin{align*}
& \Psi (\al) \underset{(j)}{\circ} \Psi (\be) (\sigma) \\
& = B_j \mu \, \Psi (\vp_1 \ot \vp_2 \ot \ldots \ot \vp_m \ot \ldots \ot
\vp_{p+q-1}) \\
& = B_j \mu^2 \, \al (\vp_1 \ot \vp_2 \ot \ldots \ot \vp_m \ot \ldots \ot
\vp_{p+q-1}) \cdot \vp_1  \vp_2 \ldots  \vp_m  \ldots 
\vp_{p+q-1} \\
& = A_j B_j \, \vp_1 \vp_2 \ldots \vp_{j+1} \vp_{j+2} \ldots \vp_{j+q} 
\ldots  \vp_{p+q-1} \\
& = \Psi ( (\al \underset{1, \, S}{\cdot} \be)_j) (\sigma) .
\end{align*}
\end{proof} 
\begin{lemma}
For $n \geq 1$, we have a commutative diagram
$$ \CD
AP(A^{\ot n}, \, A)  @>{\delta}>>  AP(A^{\ot (n+1)}, \, A)  \\
@V{\Phi}VV   @VV{\Phi}V  \\
{\rm{Hom}}_k( A^{\ot n}, \, k) @>{d^*}>> {\rm{Hom}}_k(A^{\ot (n+1)},\,
k).
\endCD  $$
\end{lemma}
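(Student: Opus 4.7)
The plan is to verify $\Phi \circ \delta = d^* \circ \Phi$ by direct evaluation on a generic basis tensor $\vp_1 \ot \vp_2 \ot \ldots \ot \vp_{n+1}$, mirroring the computation just carried out for $\Psi$ but running in the reverse direction: rather than injecting a scalar to build an autopoietic cochain, we extract one from a given autopoietic $f$. Fix $f \in AP(A^{\ot n}, \, A)$ with $f(\vp_1 \ot \ldots \ot \vp_n) = \lambda_{\vp_1, \ldots , \vp_n}(\vp_1 \vp_2 \ldots \vp_n)$, and write $\vp_i \vp_{i+1} = \mu_i \vp_{m_i}$ with $\mu_i^2 = \mu_i$.

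On the $d^* \circ \Phi$ side, I would expand $d^*(\Phi(f))(\vp_1 \ot \ldots \ot \vp_{n+1})$ via the Hom-$k$ dual of the simplicial boundary. The factorization $\vp_i \vp_{i+1} = \mu_i \vp_{m_i}$ together with $k$-linearity of $\Phi(f)$ yields
\[ d^*(\Phi(f))(\vp_1 \ot \ldots \ot \vp_{n+1}) = C_1 + \sum_{i=1}^n (-1)^i \mu_i \, \lambda_{\vp_1, \ldots , \vp_{m_i}, \ldots , \vp_{n+1}} + C_3, \]
with $C_1 = \lambda_{\vp_2, \ldots , \vp_{n+1}}$, $C_3 = (-1)^{n+1} \lambda_{\vp_1, \ldots , \vp_n}$, and a term taken to be $0$ whenever the corresponding face product vanishes. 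On the $\Phi \circ \delta$ side, the preceding closure lemma guarantees that $\delta f$ is autopoietic, so $(\delta f)(\vp_1 \ot \ldots \ot \vp_{n+1})$ is already a scalar multiple of $\vp_1 \vp_2 \ldots \vp_{n+1}$. The key manoeuvre is to rewrite each middle summand
\[ f(\vp_1 \ot \ldots \ot \vp_i \vp_{i+1} \ot \ldots \ot \vp_{n+1}) = \mu_i \, \lambda_{\vp_1, \ldots , \vp_{m_i}, \ldots , \vp_{n+1}}(\vp_1 \vp_2 \ldots \vp_{m_i} \ldots \vp_{n+1}) \]
using the identity $\mu_i(\vp_1 \ldots \vp_{m_i} \ldots \vp_{n+1}) = \vp_1 \ldots (\vp_i \vp_{i+1}) \ldots \vp_{n+1} = \vp_1 \vp_2 \ldots \vp_{n+1}$, which folds the idempotent factor into the global product. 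Applying $\Phi$ then strips off $\vp_1 \vp_2 \ldots \vp_{n+1}$ and returns $C_1 + \sum_i (-1)^i \lambda_{\vp_1, \ldots , \vp_{m_i}, \ldots , \vp_{n+1}} + C_3$.

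The expected obstacle is reconciling the two forms of each middle term: on the $d^* \circ \Phi$ side the scalar $\mu_i$ survives explicitly, whereas on the $\Phi \circ \delta$ side it has already been absorbed into the basis product. The idempotent hypothesis $\mu_i^2 = \mu_i$, together with the convention that $\Phi$ returns $0$ on zero products, is precisely what makes this identification valid: when $\vp_1 \vp_2 \ldots \vp_{n+1} = 0$ both sides collapse, and otherwise each surviving $\mu_i$ acts as the identity on the extracted scalar, delivering the required equality in $k$. This final bookkeeping step is the one substantive point; everything else is parallel to the $\Psi$-lemma proof.
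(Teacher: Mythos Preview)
Your approach is the same direct computation the paper carries out, and the ingredients (expand $\delta f$ on a basis tensor, use $\vp_i\vp_{i+1}=\mu_i\vp_{m_i}$ and $\mu_i^2=\mu_i$, compare with $d^*(\Phi(f))$) are identical. The one place where your write-up diverges is the bookkeeping for the middle terms: you absorb the factor $\mu_i$ into the global product via $\mu_i(\vp_1\cdots\vp_{m_i}\cdots\vp_{n+1})=\vp_1\cdots\vp_{n+1}$, so that $\Phi(\delta f)$ appears to return $\sum(-1)^i\lambda_{\ldots,\vp_{m_i},\ldots}$ with no $\mu_i$, and you then have to reconcile this with the $\sum(-1)^i\mu_i\lambda_{\ldots,\vp_{m_i},\ldots}$ on the $d^*\circ\Phi$ side. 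The paper runs the idempotent identity the other way: it replaces $\mu_i$ by $\mu_i^2$, uses one copy to convert $\vp_{m_i}$ back to $\vp_i\vp_{i+1}$, and keeps the other copy in the scalar, so that both sides literally read $C_1+\sum(-1)^i\mu_i\lambda_{\ldots,\vp_{m_i},\ldots}+C_3$ and no reconciliation step is needed.

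Your closing sentence (``each surviving $\mu_i$ acts as the identity on the extracted scalar'') is exactly right when $\mu_i\in\{0,1\}$, which covers the group-ring and category-algebra cases that motivate the paper; for a general idempotent $\mu_i$ in $k$ that statement is not literally true in $k$, and indeed the very definition of $\Phi$ becomes ambiguous there. So nothing is wrong, but the paper's ordering of the $\mu_i^2=\mu_i$ trick sidesteps the issue you flag rather than resolving it after the fact.
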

\begin{proof}
Let $ f \in AP(A^{\ot n}, \, A)$ and set $\vp_i \vp_{i+1} = \mu_i
\vp_{m_i}$.  Then $\mu_i^2 = \mu_i$.  Thus,
\begin{align*}
& (\delta f) (\vp_1 \ot \vp_2 \ot \ldots \ot \vp_{n+1}) =
\lambda_{\vp_2, \ldots , \vp_{n+1}} (\vp_1 \vp_2 \ldots \vp_{n+1}) \\
& + \sum_{i=1}^n (-1)^i \mu_i \lambda_{\vp_1, \vp_2, \ldots , \vp_{m_i}
  , \ldots , \vp_{n+1}} (\vp_1 \vp_2 \ldots \vp_{m_i} \ldots
\vp_{n+1}) \\
& + (-1)^{n+1} \lambda_{\vp_1, \vp_2, \ldots , \vp_n} (\vp_1 \vp_2
\ldots \vp_{n+1}) \\
& = \big( \lambda_{\vp_2, \ldots , \vp_{n+1}} + (-1)^{n+1} \lambda_{\vp_1,
  \vp_2, \ldots , \vp_n} \big) (\vp_1 \vp_2 \ldots \vp_{n+1}) \\
& + \sum_{i=1}^n (-1)^i \mu_i \lambda_{\vp_1, \vp_2, \ldots , \vp_{m_i}
  , \ldots , \vp_{n+1}} \mu_i (\vp_1 \vp_2 \ldots \vp_{m_i} \ldots
\vp_{n+1}) \\
& = (C_1 + C_2 + C_3) (\vp_1 \vp_2 \ldots \vp_i \vp_{i+1} \ldots
\vp_{n+1}), \ \ \ {\rm{where}} \\
& C_1 = \lambda_{\vp_2, \ldots , \vp_{n+1}}, \ \ \ 
C_2 = \sum_{i=1}^n (-1)^i \mu_i \lambda_{\vp_1, \vp_2, \ldots , \vp_{m_i}
  , \ldots , \vp_{n+1}},  \\
& C_3 = (-1)^{n+1} \lambda_{\vp_1, \vp_2, \ldots , \vp_n}. \\
\end{align*}
Thus, $\Phi (\delta f)(\vp_1 \ot \vp_2 \ot \ldots \ot \vp_{n+1}) = C_1
+ C_2 + C_3$.  On the other hand,
\begin{align*}
& d^*( \Phi (f))( \vp_1 \ot \vp_2 \ot \ldots \ot \vp_{n+1}) \\
& = \Phi (f)(d( \vp_1 \ot \vp_2 \ot \ldots \ot \vp_{n+1})) = C_1 + C_2
+ C_3 .
\end{align*}
\end{proof}

Given $\al \in {\rm{Hom}}_k(A^{\ot p}, \, k)$ and 
$\be \in {\rm{Hom}}_k(A^{\ot q}, \, k)$, the (simplicial) cup-$i$
product 
$\al \underset{i, \, S}{\cdot} \be \in {\rm{Hom}}_k(A^{\ot (p+q-i)},
\, k)$, $i \geq 0$, \cite{Mosher, Steenrod}.  We choose the following sign
convention:  
\begin{align*}
& d^*( \al \underset{i, \, S}{\cdot} \be) = d^*( \al ) \underset{i, \, S}{\cdot} \be 
+ (-1)^{p-1} \al  \underset{i, \, S}{\cdot} d^*( \be ) \\
& \hspace{.9in} + (-1)^p [ (-1)^{(i-1)(p+q+1)}\al  \underset{i-1, \, S}{\cdot} \be - 
(-1)^{pq} \be  \underset{i-1, \, S}{\cdot} \al ].
\end{align*}
An action of the cup-$i$ products can be defined on autopoietic
cochains via pull-back from ${\rm{Hom}}_k(A^{\ot *}, \, k)$.  
\begin{definition}
Let $f \in AP(A^{\ot p}, \, A)$ and $g \in AP(A^{\ot q}, \, A)$.  Then 
$f \underset{i, \, S}{\cdot} g \in AP(A^{\ot (p+q-i)}, \, A)$ is
defined by
$$  f \underset{i, \, S}{\cdot} g = \Psi \big( \Phi (f) 
\underset{i, \, S}{\cdot} \Phi (g) \big).  $$
\end{definition}
Since $\Phi$ and $\Psi$ are maps of cochain complexes, it follows that 
\begin{align*}
& \delta ( f \underset{i, \, S}{\cdot} g) = \delta (f) \underset{i, \, S}{\cdot} g 
+ (-1)^{p-1} f \underset{i, \, S}{\cdot} \delta (g) \\
& \hspace{.9in} + (-1)^p [ (-1)^{(i-1)(p+q+1)}f \underset{i-1, \, S}{\cdot} g - 
(-1)^{pq} g \underset{i-1, \, S}{\cdot} f ].
\end{align*}
Thus, the autopoietic cochains $AP(A^{\ot *} , \, A)$ inherit an
$E_{\infty}$-algebra structure via the cup-$i$ products.

\section{Endomrophism-Isomorphism 
Categories}

Let $P = \{i, \, j, \, \ldots \, \}$ be a finite poset of
cardinality $N$ containing no cycles and partial order denoted
$\preccurlyeq$ as in \S 2. Consider the category $\cal{C}$ with
objects given by the elements of $P$ and two types of morphisms:
\begin{align*}
& {\rm{Mor}} \, (i, \, j) = \begin{cases} e_{ij}, & i \preccurlyeq j,
  \ \ i \neq j , \\
\emptyset , & i \npreceq j , \end{cases} \\
& {\rm{Mor}} \, (i, \, i) = G_i ,
\end{align*}
where $G_i$ is a discrete group, trivial, finite or infinite.  The
identity of $G_i$ is denoted $e_{ii}$.  Composition of morphisms is
from left to right.  Thus,
$$  e_{ij} \, e_{kl} = \begin{cases} e_{il}, & j = k , \\
\emptyset , & j \neq k .  \end{cases}  $$
For $g \in G_i$, $g e_{ij} = e_{ij}$, $i \neq j$.  For $h \in  G_j$,
$ e_{ij} h = e_{ij}$, $i \neq j$.  We call the category $\cal{C}$ an
amalgam of groups and a poset, which is a special case of an
endomorphism-isomorphism category \cite{Xu}, although the set of
morphisms is not necessarily finite.  In any event, the morphisms of
$\cal{C}$ form a free $k$-module basis for the category algebra
$k[{\cal{C}}]$.  
\begin{lemma}  \label{unique-morphism}
Let $i$, $j \in {\rm{Obj}} ( {\cal{C}} )$ with $i \preccurlyeq j$ and
$i \neq j$.  Then there is a unique morphism $e_{ij} \in {\rm{Mor}}(i,
\, j)$ with domain $i$ and codomain $j$.
\end{lemma}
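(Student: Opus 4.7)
The plan is to read the statement directly off the definition of $\mathcal{C}$. By construction, for $i \preccurlyeq j$ with $i \neq j$ we declared ${\rm{Mor}}(i,j) = \{e_{ij}\}$, so both existence and uniqueness of a morphism $i \to j$ are built into the singleton definition. The only content worth verifying is consistency with categorical composition, i.e., that no composable chain of generators starting at $i$ and ending at $j$ produces a morphism distinct from $e_{ij}$.

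To check this, I would take an arbitrary composable chain of generators $m_1 \cdot m_2 \cdots m_r$ with domain $i$ and codomain $j$. Each factor $m_s$ is either an element of an endomorphism group $G_{k_s}$ or an off-diagonal morphism $e_{k_{s-1}, k_s}$ with $k_{s-1} \prec k_s$, and the sequence of intermediate objects forms a chain $i = k_0 \preccurlyeq k_1 \preccurlyeq \cdots \preccurlyeq k_r = j$ in $P$. I would then reduce the composite in three stages: first combine consecutive endomorphisms at the same object inside the relevant group $G_{k_s}$; next apply the absorption rules $g \cdot e_{k,l} = e_{k,l}$ and $e_{k,l} \cdot h = e_{k,l}$ to eliminate any group factor adjacent to an off-diagonal edge; and finally apply $e_{k,l} \cdot e_{l,m} = e_{k,m}$ iteratively to collapse the remaining chain of $e$-morphisms to a single edge. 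Since the endpoints of this collapsed chain are $i$ and $j$, the result must be $e_{ij}$.

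The only mild obstacle is handling a stray group factor $g \in G_i$ at the very start of the chain, or $h \in G_j$ at the very end, with no adjacent off-diagonal edge to absorb it at first glance. However, the hypothesis $i \neq j$ guarantees that the chain must contain at least one off-diagonal generator, and once the intermediate reductions are carried out there is always a neighboring edge morphism that absorbs such boundary group elements via the rules above. This confirms ${\rm{Mor}}(i,j) = \{e_{ij}\}$, as claimed.
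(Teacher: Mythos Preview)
Your proposal is correct and follows essentially the same approach as the paper: both observe that ${\rm Mor}(i,j)=\{e_{ij}\}$ by definition and then verify consistency by reducing an arbitrary composite of generators from $i$ to $j$ to $e_{ij}$ via the absorption rules $g\,e_{kl}=e_{kl}=e_{kl}\,h$ and the collapse $e_{kl}\,e_{lm}=e_{km}$. The paper simply writes out the generic alternating composite $g_{i_0}e_{i_0 i_1}g_{i_1}\cdots e_{i_{k-1}i_k}g_{i_k}$ and performs your reduction in one line, while you spell out the normalization into that form more carefully; the content is the same.
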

\begin{proof}
Let $i = i_0 \precneqq i_1 \precneqq i_2 \precneqq \, \ldots \,
\precneqq i_k = j$ in the partial order on the objects of $\cal{C}$.  
The symbol $ i \precneqq j$
denotes that $i \preccurlyeq j$ and $i \neq j$.  Let $g_i \in G_i$.
Then 
\begin{align*}
& g_{i_0} \, e_{i_0 i_1} \, g_{i_1} \, e_{i_1 i_2} \, g_{i_2} \, 
\ldots \, g_{i_{k-1}} \, e_{i_{k-1} i_k} \, g_{i_k}  \\
& = e_{i_0 i_1} \, e_{i_1 i_2} \, \ldots \, e_{i_{k-1} i_k} \\
& = e_{ij}.
\end{align*}
\end{proof}
The $k$-module $E = \langle e_{11}, \, e_{22}, \, \ldots , \, e_{NN}
\rangle$ is a separable subalgebra of the category algebra
$k[{\cal{C}}]$.  Thus, a $k[{\cal{C}}] \ot
k[{\cal{C}}]^{\rm{op}}$-projective, acyclic resolution of the
multiplication map
$ k[{\cal{C}}] \underset{k}{\ot} k[{\cal{C}}] \overset{b'}{\lra} k[{\cal{C}}]$,
$b'(a_0 \ot a_1) = a_0 a_1$ is given by
\begin{align*}
& k[{\cal{C}}] \overset{b'}{\lla} k[{\cal{C}}] \underset{E}{\ot} k[{\cal{C}}]
\overset{b'}{\lla} \ldots  \overset{b'}{\lla} k[{\cal{C}}]^{\ot_E  \, n}
\overset{b'}{\lla} k[{\cal{C}}]^{\ot_E (n+1)}
\overset{b'}{\lla} \\
& b'( a_0 \ot a_1 \ot \ldots a_n) = \sum_{i=0}^{n-1} (-1)^{i} 
a_0 \ot \ldots \ot a_i a_{i+1} \ot \ldots \ot a_n . 
\end{align*}
The reader can check that a contracting chain homotopy
$\vp :  k[{\cal{C}}]^{\ot_E (n+1)} \to  k[{\cal{C}}]^{\ot_E (n+2)}$ of
the $b'$-complex is given by
$$  \vp (a_0 \ot a_1 \ot \ldots \ot a_n) = e \ot a_0 \ot a_1 \ot
\ldots \ot a_n ,  $$
where $e = \sum_{i=1}^N e_{ii}$ is the identity element of
$k[{\cal{C}}]$.  

The Hochschild cohomology groups $HH^*(k[{\cal{C}}]; \, k[{\cal{C}}])$
can be computed from the cochain complex
\begin{equation} \label{C-*-complex}
 C^0 \overset{\delta}{\lra} C^1 \overset{\delta}{\lra} C^2 
\overset{\delta}{\lra} \ldots \overset{\delta}{\lra} 
C^n \overset{\delta}{\lra} C^{n+1} \overset{\delta}{\lra} \ldots \, , 
\end{equation}
where $C^n = {\rm{Hom}}_{k[{\cal{C}}] \ot k[{\cal{C}}]^{\rm{op}}}
( k[{\cal{C}}]^{\ot_E (n+2)}, \, k[{\cal{C}}])$.  It follows quickly
that as $k$-modules
$$  C^0 \simeq \sum_{i=1}^N {\rm{Hom}}_k (k_i, \, k[G_i]),  $$
where $k_i = k[e_{ii}]$ is the free $k$-module on one element
$e_{ii}$, which is isomorphic to the free $k$-module on the object $i$
in $\cal{C}$.  For $n \geq 1$, an element $f \in C^n$ is determined by
a $k$-linear map $f : k[{\cal{C}}]^{\ot_E n} \to k[{\cal{C}}]$ with
the property that
$$  f( \al_{i_1 i_2} \ot \al_{i_2 i_3} \ot \ldots \ot \al_{i_n
  i_{n+1}}) \in k[{\rm{Mor}}(i_1, \, i_{n+1})] ,  $$
where $i_1\preccurlyeq  i_2 \preccurlyeq  i_3 \preccurlyeq \ldots 
\preccurlyeq i_{n+1}$ and $\al_{ij} \in {\rm{Mor}}(i, \, j)$.  
For $n \geq 1$ and $i_1 \neq i_{n+1}$, such a cochain $f \in C^n$ is
necessarily autopoietic by lemma \eqref{unique-morphism}.  Also for $n
\geq 1$, the coboundary $\delta f : C^n \to C^{n+1}$ supports a
formulation as
\begin{align*}
& (\delta f) ( \al_{i_1 i_2} \ot \al_{i_2 i_3} \ot \ldots \ot 
\al_{i_{n+1} i_{n+2}} )  = \al_{i_1 i_2} f( \al_{i_2 i_3} \ot \ldots
\ot \al_{i_{n+1} i_{n+2}} ) \\
& + \sum_{j=1}^n (-1)^j f( \al_{i_1 i_2} \ot \ldots \ot
\al_{i_j i_{j+1}} \al_{i_{j+1} i_{j+2}} \ot \ldots \ot 
\al_{i_{n+1} i_{n+2}} ) \\
& + (-1)^{n+1} f( \al_{i_1 i_2} \ot \ldots \ot \al_{i_n i_{n+1}} ) 
\al_{i_{n+1} i_{n+2}} 
\end{align*}
For $n = 0$, $(\delta f)(\al_{i_1 i_2}) = \al_{i_1 i_2} f(e_{i_2 i_2})
- f(e_{i_1 i_1}) \al_{i_1 i_2}$.  

Let $B{\cal{C}}$ denote the classifying space of the category
$\cal{C}$ \cite{Quillen}.  Thus, $B {\cal{C}}$ is the geometric
realization of the simplicial set $B_* ({\cal{C}})$, also called the
nerve of $\cal{C}$, where
$B_0 ({\cal{C}}) = {\rm{Obj}} ({\cal{C}})$, $B_1 ({\cal{C}}) =
{\rm{Mor}} ({\cal{C}})$.  For $\al_{ij} \in {\rm{Mor}}(i, \, j)$, $n
>1$, we have
$$  B_n ({\cal{C}}) = \{ (\al_{i_0 i_1}, \, \al_{i_1 i_2}, \, \ldots \,
, \, \al_{i_{n-1} i_n}) \ | \ i_0 \preccurlyeq i_1 \preccurlyeq \ldots 
\preccurlyeq i_n \}.  $$
The face maps $d_0$, $d_1 : B_1 ({\cal{C}}) \to B_0 ({\cal{C}})$ are
given by $d_0 (\al_{ij}) = j$ and $d_1 (\al_{ij}) = i$.  The
degeneracy $s_0 : B_0 ({\cal{C}}) \to B_1 ({\cal{C}})$ is simply $s_0
(i) = e_{ii}$.  The face maps $d_i : B_n ({\cal{C}}) \to
B_{n-1}({\cal{C}})$, $i = 0$, 1, 2, $\ldots \,$, $n$, and the
degeneracies $s_i : B_n ({\cal{C}}) \to B_{n+1} ({\cal{C}})$, 
$i = 0$, 1, 2, $\ldots \,$, $n$, are the same as those in the bar
construction \cite{Quillen}.  Let $B_n = B_n( {\cal{C}})$.  The
simplicial homology groups, $H_* (B{\cal{C}}; \, k)$, are computed
from the complex
$$  k[B_0] \overset{d}{\lla} k[B_1] \overset{d}{\lla} \, \ldots \,
\overset{d}{\lla} k[B_{n-1}] \overset{d}{\lla} k[B_n]
\overset{d}{\lla} \, \ldots \, ,  $$
where $d : k[B_n] \to k[B_{n-1}]$ is given by 
$d = \sum_{i=0}^n (-1)^i d_i$.  The simplicial cohomology groups, 
$H^*  (B{\cal{C}}; \, k)$, are computed from the ${\rm{Hom}}_k$-dual
of the $k[B_*]$-complex.  Let $B^n = {\rm{Hom}}_k (k[B_n], \, k)$.
Define 
$$  \Psi : B^0 \to C^0 = \sum_{i=1}^N {\rm{Hom}}_k(k_i, \, k[G_i])  $$
by $\Psi (\gamma) (e_{ii}) = \gamma (i) \, e_{ii}$.  For $n \geq 1$, 
define $\Psi : B^n \to AP(k[{\cal{C}}]^{\ot_E n}, \, k[{\cal{C}}])$ by
\begin{align*}
&  \Psi(\gamma)( \al_{i_1 i_2} \ot \al_{i_2 i_3} \ot \ldots \ot
\al_{i_{n-1} i_n}) \\
& = \gamma ( \al_{i_1 i_2} \ot \al_{i_2 i_3} \ldots \ot
\al_{i_{n-1} i_n})  
( \al_{i_1 i_2} \, \al_{i_2 i_3} \ldots \al_{i_{n-1} i_n}), 
\end{align*}
where $i_1 \preccurlyeq i_2 \preccurlyeq \ldots \preccurlyeq i_n$.  
Then $\Psi : B^* \to C^*$ is a cochain map with image being the
autopoietic cochains, but not always strictly so.

\begin{lemma}
There is a $k$-module isomorphism
$$  AP( k[{\cal{C}}]^{\ot n}, \, k[{\cal{C}}]) \simeq  
 AP( k[{\cal{C}}]^{\ot_E n}, \, k[{\cal{C}}]) .  $$
\end{lemma}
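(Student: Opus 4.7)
The plan is to construct the isomorphism via the canonical quotient map $\pi \colon k[{\cal{C}}]^{\ot n} \to k[{\cal{C}}]^{\ot_E n}$ together with its section on composable basis tensors. The morphisms of $\cal{C}$ form a $k$-module basis for $k[{\cal{C}}]$; for such basis morphisms $\al_1, \ldots, \al_n$, the product $\al_1 \al_2 \ldots \al_n$ in $k[{\cal{C}}]$ is either a single morphism (when consecutive targets match consecutive sources) or zero (otherwise). Consequently, any $f \in AP(k[{\cal{C}}]^{\ot n}, \, k[{\cal{C}}])$ vanishes on every non-composable basis tensor, since the autopoietic equation forces
$$ f(\al_1 \ot \ldots \ot \al_n) = \lambda_{\al_1, \ldots, \al_n} (\al_1 \al_2 \ldots \al_n) = 0 $$
in that case. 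This is the pivotal observation.

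Next I would show that any such $f$ factors through $\pi$. The kernel of $\pi$ is generated as a $k$-module by elementary tensors of the form
$$ \sigma_{j,k} = \al_1 \ot \ldots \ot \al_j e_{kk} \ot \al_{j+1} \ot \ldots \ot \al_n - \al_1 \ot \ldots \ot \al_j \ot e_{kk} \al_{j+1} \ot \ldots \ot \al_n, $$
with $\al_i$ basis morphisms and $e_{kk} \in E$. A short case analysis handles $f(\sigma_{j,k})$: if $k$ equals both the codomain of $\al_j$ and the domain of $\al_{j+1}$, the two terms are literally equal; if $k$ matches only one of them, the other term vanishes as a tensor and the surviving term represents a non-composable sequence, so $f$ annihilates it by the previous paragraph; if $k$ matches neither, both tensors are zero. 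Thus $f$ descends to $\tilde{f} \in AP(k[{\cal{C}}]^{\ot_E n}, \, k[{\cal{C}}])$, and the assignment $f \mapsto \tilde f$ is $k$-linear.

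For the inverse, given $g \in AP(k[{\cal{C}}]^{\ot_E n}, \, k[{\cal{C}}])$ set $f = g \circ \pi$. I would verify $f$ is autopoietic by evaluating on basis tensors: on composable ones, $\pi$ sends the tensor to the corresponding basis element of $k[{\cal{C}}]^{\ot_E n}$ and $g$ gives the required scalar multiple of the product; on non-composable ones, $\pi$ gives zero and the product $\al_1 \ldots \al_n$ is also zero, so the autopoietic condition is trivially satisfied by any scalar. The two assignments are mutual inverses by the universal property of $\pi$ together with the vanishing of autopoietic cochains on non-composable basis tensors.

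The main obstacle is the kernel-vanishing case analysis in the middle step, but once the initial observation that autopoietic cochains must vanish on non-composable morphism tensors is recorded, each case is forced. Everything else is routine bookkeeping.
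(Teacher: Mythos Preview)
Your argument is correct and rests on the same key observation as the paper's: an autopoietic cochain on $k[{\cal{C}}]^{\ot n}$ must vanish on any non-composable basis tensor because the product $\al_1 \cdots \al_n$ is then zero in $k[{\cal{C}}]$. The paper's proof is a terse sketch that records this fact (together with a nod to Lemma~\eqref{unique-morphism}) and leaves the descent through the quotient implicit; your kernel case-analysis simply makes those details explicit.
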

\begin{proof}
The proof follows essentially from Lemma \eqref{unique-morphism}.
Also, for morphisms $\al_j$ of $\cal{C}$ and
$$  \al_1 \ot \ldots \al_i \ot \al_{i+1} \ot \ldots \ot \al_n \in
k[{\cal{C}}]^{\ot n},  $$
if $\al_i$ and $\al_{i+1}$ are not composable, then
$\al_1 \ot \ldots \al_i \ot \al_{i+1} \ot \ldots \ot \al_n = 0$ in
$k[{\cal{C}}]^{\ot_E n}$.  For 
$f \in AP( k[{\cal{C}}]^{\ot n}, \, k[{\cal{C}}])$, we have
\begin{align*}
& f( \al_1 \ot \ldots \al_i \ot \al_{i+1} \ot \ldots \ot \al_n ) 
= \lambda (\al_1 \ldots \al_i \al_{i+1} \ldots \al_n) = 0.
\end{align*}
\end{proof} 
For $n \geq 1$, the non-autopoietic cochains in
${\rm{Hom}}_k(k[{\cal{C}}]^{\ot n} , \, k[{\cal{C}}])$ can be identified with 
$\sum_{i=1}^N NP( k[G_i]^{\ot n}, \, k[G_i])$ studied in \S 2.  Of course, for
$* \geq 0$, $NP( k[G_i]^{\ot *}, \, k[G_i])$ forms a subcomplex of 
${\rm{Hom}}_k ( k[G_i]^{\ot *}, \, k[G_i])$. 

\begin{theorem}
Let $k$ be a commutative ring with unit and let $\cal{C}$ be a
category formed by an amalgam of groups and a poset as defined above.
For the category algebra $k[{\cal{C}}]$, there is an isomorphism of
$k$-modules 
$$  HH^* (k[{\cal{C}}]; \, k[{\cal{C}}]) \simeq
H^*( B{\cal{C}}) \oplus \Big( \sum_{i=1}^N HH^*(k[G_i]; \,
k[G_i])/H^*(BG_i) \Big),  $$
where $H^*(BG_i)$ is identified with $H^*(AP(k[G_i]^{\ot *}))$.  Note that
$H^*(B {\cal{C}})$ is a subalgebra of $HH^* (k[{\cal{C}}]; \,
k[{\cal{C}}])$, realized by the $E_{\infty}$-subalgebra of autopoietic
Hochschild cochains.  
\end{theorem}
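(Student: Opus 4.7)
The plan is to compute $HH^*(k[{\cal{C}}]; \, k[{\cal{C}}])$ from the $E$-relative cochain complex $C^*$ of \eqref{C-*-complex} and to decompose the answer into an autopoietic contribution (yielding $H^*(B{\cal{C}})$) and a non-autopoietic contribution (yielding the group-ring quotients). For the autopoietic contribution, I would verify that the cochain map $\Psi : B^* \to AP(C^*)$ defined just before the theorem is an isomorphism of cochain complexes in positive degrees: by Lemma~\ref{unique-morphism}, whenever a composable chain $\al_{i_1 i_2} \ot \ldots \ot \al_{i_n i_{n+1}}$ has some $i_k \precneqq i_{k+1}$, the target ${\rm{Mor}}(i_1, i_{n+1})$ is spanned by $e_{i_1 i_{n+1}}$ and every cochain value is determined by one scalar; on constant chains the autopoietic condition similarly reduces a cochain value to one scalar per input, and these scalars are precisely the data of an element of $B^n$. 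In degree zero, strict autopoieticity matches $B^0 = {\rm{Hom}}_k(k[{\rm{Obj}}({\cal{C}})], \, k)$, with the remainder of $C^0 = \sum_i {\rm{Hom}}_k(k_i, \, k[G_i])$ deferred to the $k[G_i]$ summands. Invoking the lemma just above the theorem that $AP(k[{\cal{C}}]^{\ot n}) \simeq AP(k[{\cal{C}}]^{\ot_E n})$, one obtains $H^*(AP(C^*)) \cong H^*(B{\cal{C}})$.

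For the group-ring contributions, for each $i = 1, \, \ldots \, , N$ I would use the restriction map $r_i : C^* \to {\rm{Hom}}_k(k[G_i]^{\ot *}, \, k[G_i])$ that evaluates a cochain on constant chains in $G_i^{\ot n}$. Each $r_i$ is a cochain map, and by Lemma~\ref{direct-sum} its target splits as $AP(k[G_i]^{\ot *}) \oplus NP(k[G_i]^{\ot *})$ with cohomologies $H^*(BG_i)$ and $HH^*(k[G_i]; \, k[G_i])/H^*(BG_i)$ respectively (Lemma~\ref{AP-iso}). The autopoietic part of $r_i$ is already detected by $H^*(B{\cal{C}})$ through the inclusion $BG_i \hookrightarrow B{\cal{C}}$; the non-autopoietic part contributes a fresh summand. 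Summing across $i$ and adjoining $H^*(B{\cal{C}})$ gives the stated isomorphism, since the tensor factors $k[G_i]^{\ot n}$ and $k[G_j]^{\ot n}$ for $i \neq j$ are disjoint summands of $k[{\cal{C}}]^{\ot_E n}$, as are constant chains and strict-poset chains.

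The main obstacle is verifying the direct-sum structure at the chain level: the naive projection onto $AP(C^*)$ does not quite commute with $\delta$, because a boundary term such as $e_{ij} \, f(g \ot \ldots)$ can collapse $k[G_j]$-coefficients into the single morphism $e_{ij}$ and create spurious autopoietic contributions even when $f$ is non-autopoietic on $G_j^{\ot n}$. I would resolve this by filtering $C^*$ by the number of strict-poset morphisms in the input chain --- a filtration respected by $\delta$ since multiplication in $k[{\cal{C}}]$ never creates a new strict-poset factor --- and analyzing the resulting spectral sequence. The $E_1$-page separates the $B{\cal{C}}$-contribution from the group-ring contributions, and it collapses at $E_2$ by an acyclicity argument paralleling \cite{Xu}, producing the asserted $k$-module isomorphism. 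The $E_{\infty}$-subalgebra statement for the $H^*(B{\cal{C}})$ summand is then immediate from the results of \S 3, since the multipliers $\mu_{\al,\be}$ in $k[{\cal{C}}]$ are idempotent.
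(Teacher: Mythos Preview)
Your approach differs from the paper's, and in one respect is more careful. The paper does not filter or run a spectral sequence at all: it simply asserts that $\Psi[B^n] = AP(k[{\cal{C}}]^{\ot_E n}, \, k[{\cal{C}}])$, that any cochain failing to be autopoietic must lie in $\sum_i NP(k[G_i]^{\ot n}, \, k[G_i])$, and that this sum is a $\delta$-closed subcomplex of $C^*$, so that $C^*$ splits as a direct sum of the two subcomplexes and Lemma~\ref{direct-sum} finishes the job. The obstacle you flag is real and the paper does not address it: for instance, if $t \in G_j$ has order two and $\vp \in NP(k[G_j])$ sends $e_{jj} \mapsto t$ and is zero elsewhere, then $(\delta\vp)(e_{ij} \ot e_{jj}) = e_{ij}\cdot t = e_{ij}$, a nonzero autopoietic value on a non-constant chain, so the extension-by-zero of $NP(k[G_j]^{\ot *})$ is not literally $\delta$-closed inside $C^*$. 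Your caution here is justified, and your route is genuinely different from the paper's one-line claim.

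That said, your spectral-sequence fix is only a sketch. The filtration by number of strict-poset factors is indeed compatible with $\delta$, but you have not identified $E_1$ beyond filtration degree zero, nor argued the collapse, nor handled the extension problems separating the associated graded from an honest direct sum of $k$-modules. A cleaner line toward the same conclusion is to note that $0 \to AP(C^*) \to C^* \to \sum_i NP(k[G_i]^{\ot *}) \to 0$ is already a short exact sequence of complexes (the induced quotient differential being the individual Hochschild $\delta$ on each $k[G_i]$), and then either exhibit a chain-level section or show that the connecting homomorphism in the long exact sequence vanishes.
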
 
\begin{proof}
The image $\Psi [B^0] = \sum_{i=1}^N AP(k, \, k[G_i])$.  The
cokernel $C^0 / \Psi[B^0]$ can be identified with the non-autopoietic
cochains $\sum_{k=1}^n NP(k, \, k[G_i])$.  We have
$$  \Psi [B^n] = AP(k[{\cal{C}}]^{\ot_E n}, \, k[{\cal{C}}]),
\ \ \ n \geq 1 .  $$
If $f : k[{\cal{C}}]^{\ot_E n} \to  k[{\cal{C}}]$ is not autopoietic,
then $f \in \sum_{i=1}^N NP(k[G_i]^{\ot n}, \, k[G_i])$.  Now, 
$\sum_{i=1}^N NP(k[G_i]^{\ot *}, \, k[G_i])$, $* \geq 0$, is a
subcomplex of $\{ C^* \}$ defined in \eqref{C-*-complex}.  The terms 
$\sum_{i=1}^N HH^*(k[G_i]; \, k[G_i])/H^*(BG_i)$ then follow from
Lemma \eqref{direct-sum}.   
\end{proof}
\begin{corollary}
Let $X$ be the wedge 
$X = B{\cal{C}} \vee \big( \vee_{i=1}^N {\rm{Maps}}(S^1, \, BG_i)/BG_i \Big)$,
where $BG_i$ is the subspace of constant loops in 
${\rm{Maps}}(S^1, \, BG_i)$.  Then
$$    HH^* (k[{\cal{C}}]; \, k[{\cal{C}}]) \simeq H^*(X; \, k).  $$
The above $k$-module isomorphism can be used to induce a simplicial
cup product on $HH^* (k[{\cal{C}}]; \, k[{\cal{C}}])$ that agrees with
the Gerstenhaber product on autopoietic cochains.  
\end{corollary}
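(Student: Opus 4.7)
The plan is to identify each $k$-module summand in the direct-sum decomposition produced by the preceding Theorem with the cohomology of the corresponding piece of $X$, and then assemble these via the standard wedge splitting. First I would rewrite the main decomposition as
\[
HH^*(k[{\cal{C}}]; \, k[{\cal{C}}]) \simeq H^*(B{\cal{C}}; \, k) \oplus \bigoplus_{i=1}^N \Big( HH^*(k[G_i]; \, k[G_i])/H^*(BG_i) \Big).
\]
The first summand is already in the desired form, so the work is in identifying the other summands topologically.

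Second, I would invoke the geometric realization $|N^{\rm{cy}}_*(G_i)| \simeq {\rm{Maps}}(S^1, \, BG_i)$ recalled after Lemma \eqref{AP-iso} to express $HH^*(k[G_i]; \, k[G_i])$ as $H^*({\rm{Maps}}(S^1, \, BG_i); \, k)$, with the subalgebra $H^*(BG_i)$ corresponding under this equivalence to the subspace of constant loops. The inclusion $BG_i \hookrightarrow {\rm{Maps}}(S^1, \, BG_i)$ admits a retraction by evaluation at any basepoint of $S^1$, and is a cofibration; hence the long exact sequence of the pair splits and yields
\[
H^*({\rm{Maps}}(S^1, \, BG_i); \, k) \simeq H^*(BG_i; \, k) \oplus \widetilde{H}^*({\rm{Maps}}(S^1, \, BG_i)/BG_i; \, k).
\]
Quotienting gives $HH^*(k[G_i]; \, k[G_i])/H^*(BG_i) \simeq \widetilde{H}^*({\rm{Maps}}(S^1, \, BG_i)/BG_i; \, k)$.

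Third, since $B{\cal{C}}$ is path-connected (the objects of ${\cal{C}}$ are linked by the morphisms $e_{ij}$ of the poset) and each quotient space ${\rm{Maps}}(S^1, \, BG_i)/BG_i$ is pointed with the standard nondegenerate basepoint, the wedge-sum formula for reduced cohomology gives
\[
H^*(X; \, k) \simeq H^*(B{\cal{C}}; \, k) \oplus \bigoplus_{i=1}^N \widetilde{H}^*({\rm{Maps}}(S^1, \, BG_i)/BG_i; \, k),
\]
and combining with the identifications above proves the displayed isomorphism.

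Finally, for the claim about the simplicial cup product, I would transport the cup product on $H^*(X; \, k)$ across the isomorphism. On the $H^*(B{\cal{C}}; \, k)$ summand this transported product is exactly the simplicial cup product on cochains of $B{\cal{C}}$, and under the map $\Psi : B^* \to C^*$ of \S 4 such cochains are identified with the autopoietic subalgebra. The already-proved Lemma that $\Psi$ carries the simplicial cup product to the Gerstenhaber product then gives agreement. The main obstacle I anticipate is technical rather than conceptual: verifying that the retraction-splitting in step two is compatible with the chain-level identifications used to compute $HH^*$ (so that the resulting $k$-module decomposition matches the $AP/NP$ decomposition of Lemma \eqref{direct-sum} rather than merely being abstractly isomorphic), and making sure the pointed-wedge splitting is applied with the correct basepoints so no stray copies of $H^0$ appear or disappear.
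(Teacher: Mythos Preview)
The paper offers no proof of this corollary; it is presented as an immediate consequence of the preceding theorem together with the free-loop-space interpretation of Hochschild (co)homology recalled in \S 2. Your outline supplies exactly the steps one would expect and is, for finite $G_i$, correct and in line with what the paper intends.

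One point deserves more caution than you give it. In your second step you assert $HH^*(k[G_i];\,k[G_i]) \simeq H^*({\rm{Maps}}(S^1,\,BG_i);\,k)$ by invoking $|N^{\rm cy}_*(G_i)| \simeq {\rm{Maps}}(S^1,\,BG_i)$. That realization yields the \emph{homology} statement; for cohomology one must pass through the map $\Phi$ of \S 2, which the paper itself warns is only injective, not surjective, when $G_i$ is infinite. Concretely, $HH^*(k[G_i];\,k[G_i])$ decomposes as a direct \emph{sum} over conjugacy classes while $H^*({\rm{Maps}}(S^1,\,BG_i))$ is the corresponding direct \emph{product}, and these differ whenever $G_i$ has infinitely many conjugacy classes (already for $G_i=\bz$). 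So the obstacle you flag in your final paragraph is not merely technical compatibility of splittings---the two $k$-modules can genuinely fail to be isomorphic. This gap, however, lies in the paper's statement of the corollary rather than in your method: for finite groups your argument goes through cleanly, and that appears to be the regime the author tacitly has in mind here despite the earlier remarks about infinite groups.

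A minor side remark: $B{\cal{C}}$ is path-connected only when the Hasse diagram of the underlying poset $P$ is connected, so your wedge-sum bookkeeping in degree zero should allow for that.
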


\bigskip
\noindent
The corresponding author states that there is no conflict of interest.

\end{document}